\def\resetMathstrut@{%
  \setbox\z@\hbox{%
    \mathchardef\@tempa\mathcode`\(\relax
    \def\@tempb##1"##2##3{\the\textfont"##3\char"}%
    \expandafter\@tempb\meaning\@tempa \relax
  }%
  \ht\Mathstrutbox@1.2\ht\z@ \dp\Mathstrutbox@1.2\dp\z@
}
\renewcommand{\le}{\leqslant}
\renewcommand{\ge}{\geqslant}
\renewcommand{\leq}{\leqslant}
\renewcommand{\geq}{\geqslant}
\renewcommand{\setminus}{\smallsetminus}
\newcommand{\ug}{\upgamma}
\newcommand{\ur}{\uprho}
\newcommand{\ud}[0]{\,\mathrm{d}}
\renewcommand{\det}{\mathrm{det}}
\newcommand{\B}{\mathsf{B}}
\newcommand{\n}{\{1,\ldots,n\}}
\newcommand{\m}{\{1,\ldots,m\}}
\newcommand{\X}{\mathbf X}
\newcommand{\sfC}{\mathsf{C}}
\newcommand{\f}{\varphi}
\newcommand{\T}{\mathsf{T}}
\newcommand{\sY}{\mathsf{Y}}
\renewcommand{\d}{\delta}
\newcommand{\e}{\varepsilon}
\newcommand{\R}{\mathbb R}
\newcommand{\1}{\mathbf 1}
\newtheorem{theorem}{Theorem}
\newtheorem{lemma}[theorem]{Lemma}
\newtheorem{remark}[theorem]{Remark}
\newtheorem{conjecture}[theorem]{Conjecture}
\newcommand{\ut}{\uptau}
\newcommand{\up}{\upphi}
\newcommand{\ub}{\upbeta}
\newcommand{\Ch}{\mathrm{Ch}}
\newcommand{\Per}{\mathrm{Per}}
\renewcommand{\S}{\mathsf{S}}
\renewcommand{\subset}{\subseteq}
\newcommand{\I}{\mathsf{I}}
\newcommand{\N}{\mathbb N}
\newcommand{\Z}{\mathbb Z}
\newcommand{\eqdef}{\stackrel{\mathrm{def}}{=}}
\newcommand{\A}{\mathsf{A}}
\newcommand{\sC}{\mathsf{C}}
\newcommand{\V}{\mathsf{V}}
\renewcommand{\emptyset}{\varnothing}
\newcommand{\conv}{\mathrm{conv}}
\newcommand{\iq}{\mathrm{iq}}
\newcommand{\Id}{\mathsf{I}}
\newcommand{\M}{\mathsf{M}}
\newcommand{\GL}{\mathsf{GL}}
\newcommand{\SL}{\mathsf{SL}}
\renewcommand{\O}{\mathsf{O}}
\newcommand{\vol}{\mathrm{vol}}
\begin{document}

\title{Approximate isoperimetry for convex polytopes}\thanks{K.~J.~B. was supported by NKKP grant 150613.   A.N.~ was supported by NSF grant DMS-2054875, BSF grant  201822, and a Simons Investigator award. }

\author{Keith Ball}
\address{Mathematics Institute\\ University of Warwick\\ Coventry CV4 7AL, UK}
\email{k.m.ball@warwick.ac.uk}

\author{K\'aroly J. B\"or\"oczky}
\address{Alfr\'ed R\'enyi Institute of Mathematics,\\ Realtanoda u.~13-15, H-1053 Budapest, Hungary}
\email{boroczky.karoly.j@renyi.hu}

\author{Assaf Naor}
\address{Mathematics Department\\ Princeton University\\ Fine Hall, Washington Road, Princeton, NJ 08544-1000, USA}
\email{naor@math.princeton.edu}

\maketitle

\vspace{-0.3in}

\begin{abstract} For all $n,\up\in \N$ with $\up\ge n+1$, the smallest possible isoperimetric quotient of an $n$-dimensional convex polytope that has $\up$ facets is shown to be bounded from above and from below by positive universal constant multiples of $\max\big\{n/\sqrt{1+\log (\up/n)},\sqrt{n}\big\}$. For all $n\in \N$ and $2n\le \ub\in 2\N$, it is shown that every $n$-dimensional origin-symmetric convex polytope that has $\ub$ vertices admits an affine image whose isoperimetric quotient is at most a  universal constant multiple of $\min\big\{\sqrt{\log(\ub/n)},n\big\}$, which is  sharp. The weak isomorphic reverse isoperimetry conjecture is proved for $n$-dimensional convex polytopes that have $O(n)$ facets by demonstrating that any such polytope $K$ has an image $K'$ under a volume preserving matrix and a convex body $L\subset K'$ such that the isoperimetric quotient of $L$ is at most a universal constant multiple of $\sqrt{n}$, and also $\sqrt[n]{\vol_n(L)/\vol_n(K)}$ is at least a positive universal constant. 
\end{abstract}

\section{Introduction}

Fix $n\in \N$. Whenever  a convex polytope in $\R^n$ will be mentioned below, it will be assumed tacitly that it is also a convex body, namely, it is compact and has nonempty interior. The isoperimetric quotient\footnote{The literature often uses the terminology and notation that we adopt herein (e.g.~\cite{Sch89}), but it is also common for the $n$'th power of the right hand side of~\eqref{eq:def iq} to be called  the isoperimetric quotient of $K$ (e.g.~\cite[page~269]{Had57} or~\cite[page~203]{Gru07}).} of a convex body $K\subset \R^n$ is defined to be the following scale-invariant quantity: 
\begin{equation}\label{eq:def iq}
\iq(K)=\frac{\vol_{n-1}(\partial K)}{\vol_n(K)^\frac{n-1}{n}},
\end{equation}
where $\vol_n$ denotes the Lebesgue measure on $\R^n$ and $\vol_{n-1}$ denotes the surface area measure on $\R^n$. 

By the  classical isoperimetric theorem, the isoperimetric quotient of any convex body $K\subset \R^n$ is at least the isoperimetric quotient of the Euclidean ball, i.e.,  the following lower bound on $\iq(K)$ holds: 
\begin{equation}\label{eq:quote isoperimetric theorem}
\iq(K)\geq \iq(B_{\ell^n_{\!\!2}})=\frac{n\sqrt{\pi}}{\Gamma\left(\frac{n}{2}+1\right)^{\frac{1}{n}}}\asymp\sqrt{n}.
\end{equation}
In~\eqref{eq:quote isoperimetric theorem}, as well as throughout what follows,  $B_\X=\{x\in \R^n:\ \|x\|_\X\le 1\}$ denotes the unit ball of a normed space $\X=(\R^n,\|\cdot\|_\X)$. The space $\ell_{\!\!2}^n$ is $\R^n$ equipped with the standard scalar product  $\langle x,y\rangle =x_1y_1+\ldots x_ny_n$ for  $x=(x_1,\ldots,x_n), y=(y_1,\ldots, y_n)\in \R^n$.  We will also write $B^n=B_{\!\ell_{\!\!2}^n}$ and $S^{n-1}=\partial B^n$. In addition to  the usual $O(\cdot),o(\cdot),\Omega(\cdot), \Theta(\cdot)$ asymptotic notation, we use in~\eqref{eq:quote isoperimetric theorem}, as well as  throughout  the ensuing discussion, the following common conventions for asymptotic notation: Given $a,b\ge 0$, by writing
$a\lesssim b$ or $b\gtrsim a$ we mean that $a\le C b$ for some
universal constant $C>0$, and $a\asymp b$
stands for $(a\lesssim b) \wedge  (b\lesssim a)$.

The following theorem shows how we may improve the asymptotic estimate $\iq(K)\gtrsim \sqrt{n}$ of~\eqref{eq:quote isoperimetric theorem}  if one imposes the further restriction that, rather than being an arbitrary convex body, $K$ is a convex polytope with a fixed number of facets; see Section~\ref{sec:history} below for the history of such investigations.

\begin{theorem}\label{thm:isoperim for poly} Fix  $n,\up\in \N$ with $\up\ge n+1$. Every convex polytope $K\subset \R^n$   that has  $\up$ facets satisfies:
\begin{equation}\label{eq:iq lower in thm}
\iq(K)\gtrsim \max\Bigg\{\frac{n}{\sqrt{1+\log \frac{\upphi}{n}}}, \sqrt{n}\Bigg\}. 
\end{equation}
Furthermore, there exists a convex polytope in $\R^n$ that has  $\up$ facets whose isoperimetric quotient is at most a universal constant multiple of the right hand side of~\eqref{eq:iq lower in thm}. 
\end{theorem}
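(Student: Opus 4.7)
The lower bound $\iq(K)\gtrsim\sqrt n$ in~\eqref{eq:iq lower in thm} is exactly the classical isoperimetric inequality~\eqref{eq:quote isoperimetric theorem}, so the real content is the estimate $\iq(K)\gtrsim n/\sqrt{1+\log(\up/n)}$, together with the matching construction. For the lower bound, rescale so $\vol_n(K)=1$ and translate so the origin lies in the interior of $K$; write $K=\bigcap_{i=1}^{\up}\{x\in\R^n:\langle x,u_i\rangle\le h_i\}$ with $u_i\in S^{n-1}$ and $h_i>0$, and set $a_i=\vol_{n-1}(F_i)$. The cone-decomposition identity $n\vol_n(K)=\sum_i h_i a_i$ combined with the Cauchy--Schwarz inequality gives
\[
\iq(K)=\vol_{n-1}(\partial K)=\sum_{i=1}^{\up} a_i\ \ge\ \frac{\bigl(\sum_i h_i a_i\bigr)^2}{\sum_i h_i^2 a_i}=\frac{n^2}{\sum_i h_i^2 a_i}.
\]
With the cone-volume probability measure $\mu$ on $\{1,\dots,\up\}$ having masses $c_i:=h_i a_i/n=\vol_n(C_i)$ (where $C_i=\conv(\{0\}\cup F_i)$ and the cones $C_i$ tile $K$ with $\sum c_i=1$), this reads $\iq(K)\ge n/\E_\mu h$, so the task reduces to the cone-volume moment bound
\[
\E_\mu h\ =\ \sum_{i=1}^{\up} c_i h_i\ \lesssim\ \sqrt{1+\log(\up/n)}.
\]

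The heart of the argument is this moment bound. I would establish it through a sub-Gaussian tail estimate of the form
\[
\mu\bigl(\{i:h_i\ge t\}\bigr)\ =\ \vol_n\!\Bigl(\bigcup_{h_i\ge t}C_i\Bigr)\ \lesssim\ \exp(-c t^2)\qquad\text{for}\ t\gtrsim \sqrt{1+\log(\up/n)},
\]
which on integration yields the desired $L^1$ bound. For the tail itself, two ingredients should combine: $(i)$ for each facet with $h_i\ge 2t$ the truncated cone $C_i\cap\{\langle u_i,x\rangle\ge t\}$ carries a definite fraction of $\vol_n(C_i)$, the truncations have disjoint interiors across $i$, and each is contained in $K\cap\{|x|\ge t\}$; and $(ii)$ a concentration inequality for the Euclidean norm on $K$ at scale $t$ (for instance Paouris' exponential moment estimate, after first placing $K$ in isotropic position by a volume-preserving affine change of variables, which leaves $\up$ and the cone-volume measure unchanged). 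The principal technical obstacle is to recover the \emph{shifted} logarithm $\log(\up/n)$ rather than the crude union-bound $\log\up$: I expect this to require a further volume-preserving adjustment placing the surface-area measure of $K$ into isotropic position $\sum_i a_i u_i u_i^{\!\top}\propto I_n$, so that the family $\{u_i\}$ has effective dimension only $n$, very much in the spirit of Ball's geometric Brascamp--Lieb inequality.

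For the matching construction I would split into two regimes. When $n+1\le\up\le 2n$, any regular simplex or cube realises $\iq\asymp n$, matching the formula since $1+\log(\up/n)\asymp 1$ on this range. For $\up\ge 2n$ I would take $u_1,\dots,u_\up\in S^{n-1}$ to be a suitable near-isotropic system of unit vectors (a carefully chosen spherical code, or i.i.d.\ uniform samples) and set $K=\bigcap_{i=1}^{\up}\{x:\langle u_i,x\rangle\le 1\}$. Since every $h_i=1$, the cone-decomposition identity specialises to $\vol_{n-1}(\partial K)=n\vol_n(K)$, giving the clean formula $\iq(K)=n\vol_n(K)^{1/n}$. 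A Gaussian-maximum estimate then gives $\max_i\langle u_i,v\rangle\asymp\sqrt{(1+\log(\up/n))/n}$ on a large fraction of $v\in S^{n-1}$ (for a good choice of $\{u_i\}$), so $\rho_K(v)\asymp\sqrt{n/(1+\log(\up/n))}$, hence $\vol_n(K)^{1/n}\asymp 1/\sqrt{1+\log(\up/n)}$, and therefore $\iq(K)\lesssim n/\sqrt{1+\log(\up/n)}$. Once $\up\ge e^{\Omega(n)}$ the same construction yields $\iq(K)\asymp\sqrt n$, matching the other branch of the maximum in~\eqref{eq:iq lower in thm}.
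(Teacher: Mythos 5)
Your Cauchy--Schwarz reduction $\iq(K)\ge n/\E_\mu h$ (with $\vol_n(K)=1$ and $\mu$ the cone-volume probability measure) is correct, and it is in fact an equality precisely when $K$ circumscribes a ball --- so this step is essentially the cone decomposition identity of Lemma~\ref{lem:cones}. The gap is in the claimed moment bound $\E_\mu h\lesssim\sqrt{1+\log(\up/n)}$: this is not a position-invariant statement, and it is simply false for a general position of $K$. Take $K=[-\e/2,\e/2]\times[-c,c]^{n-1}$ with $c=\tfrac12\e^{-1/(n-1)}$ so that $\vol_n(K)=1$; then $\E_\mu h=\tfrac1n\sum h_i^2 a_i\asymp\e^{-1/(n-1)}\to\infty$ as $\e\to0$, while $\up=2n$ stays fixed. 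The inequality you would need therefore cannot be proved pointwise; you must first normalize the position. You gesture at two different normalizations --- covariance-isotropic (to invoke Paouris) and surface-area-isotropic (to capture $\log(\up/n)$) --- but these are incompatible in general, and moreover Paouris' concentration lives at the scale $\sqrt n\,L_K$, which for $\up\ll e^{cn}$ is far larger than $\sqrt{1+\log(\up/n)}$; so even granting the good position, the sub-Gaussian tail you posit does not follow from the tools you cite. You also cannot freely pass to a transformed body $\A K$, because $\iq$ is not affine-invariant: a lower bound on $\iq(\A K)$ only yields a lower bound on $\iq(K)$ if $\A$ is chosen to be the $\iq$-minimizer (Petty's position), and that is exactly where your proof would need to work but where you have no control.

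The missing ingredient is Lindel\"of's theorem (Theorem~\ref{thm:lindelof} in the paper), which is a \emph{different} normalization than any isotropic position: for a polytope with prescribed outer normals $u_1,\ldots,u_\up$, the one circumscribed about a ball minimizes $\iq$. This reduces at once to the case $h_1=\cdots=h_\up$; there your Cauchy--Schwarz step degenerates to equality, $\E_\mu h$ becomes the common inradius, and the required bound is exactly the Carl--Pajor/Gluskin volume estimate $\vol_n(\{x:\max_i|\langle x,u_i\rangle|\le1\})^{1/n}\gtrsim1/\sqrt{1+\log(\up/n)}$ applied to the normals of $K$ --- no concentration machinery is needed. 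So the spine of your argument (cone decomposition plus a polytope volume lower bound with the logarithm shifted by $n$) is the right one, but you are missing the reduction step that makes it go through in every position, and once you have it the Cauchy--Schwarz machinery is superfluous. On the construction side, your random/spherical-code approach is genuinely different from the paper's, which builds the extremal polytope explicitly as an $\ell_1$-sum of rescaled cross-polytopes of a common intermediate dimension $m\asymp\log(\up/n)$; the explicit construction avoids having to prove a matching \emph{upper} volume bound for a random $\{x:\max_i\langle u_i,x\rangle\le1\}$ (which, being a superset of the symmetric version, has larger volume and can be unbounded), so it is both more elementary and more robust.
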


By considering Cartesian products of cross-polytopes, it is not hard to convince oneself that for many values of $n$ and $\up$ the lower bound on $\iq(K)$ in~\eqref{eq:iq lower in thm} cannot be improved for some $K$. For every $m\in \N$, the cross-polytope $B_{\ell_{\!\!1}^m}=\{x=(x_1,\ldots,x_m)\in \R^m: |x_1|+\ldots+|x_m|\le 1\}$ satisfies $\iq(B_{\ell_{\!\!1}^m})\asymp \sqrt{m}$. Given $k\in \N$, the $k$-fold Cartesian product  $(B_{\ell_{\!\!1}^m})^k$ is an $n$-dimensional convex polytope for $n=km$ that has $\up=k2^m$ facets and whose isoperimetric quotient equals $k\iq(B_{\ell_{\!\!1}^m})\asymp k\sqrt{m}=n/\sqrt{m}\asymp n/\sqrt{1+\log(\up/n)}$. 

So, the main content of Theorem~\ref{thm:isoperim for poly} is to demonstrate that the above explicit and elementary computation for Cartesian products of cross-polytopes produces the worst-possible behavior (up to universal constant factors) among all $n$-dimensional convex polytope that have $\up$ facets, namely,~\eqref{eq:iq lower in thm}   holds for any such polytope whatsoever. Our proof of this statement consists of  a quick concatenation of  (substantial) results that are available in the literature; its details appear in Section~\ref{sec:proof1} below.  

\begin{remark} {\em One could also wonder about improving the classical estimate $\iq(K)\gtrsim \sqrt{n}$ in~\eqref{eq:quote isoperimetric theorem} when $K$ is restricted to be a convex polytope that has $\ub$ vertices for some integer $\ub\ge n+1$. If $\ub\ge 2n$, then this is impossible since $B_{\ell_{\!\!1}^n}$ has $2n$ vertices and  $\iq(B_{\ell_{\!\!1}^n})\asymp \sqrt{n}$, so by considering the convex hull of $B_{\ell_{\!\!1}^n}$  with additional $\ub-2n$ points from $\R^n\setminus B_{\ell_{\!\!1}^n}$ that are in general position and arbitrarily close to $\partial B_{\ell_{\!\!1}^n}$, one sees that there is a convex  polytope in $\R^n$ that has $\ub$ vertices and whose isoperimetric quotient is of order $\sqrt{n}$. If $\up=n+1$, then a stronger isoperimetric lower bound holds since a convex polytope $K\subset \R^n$ that has $n+1$ vertices is a simplex, whence $\iq(K)\ge \iq(\triangle_n) \asymp n$ by~\cite{Had57} (this also follows from~\cite{Pet61}),  where $\triangle_n\subset \R^n$ is the regular simplex. It remains open to determine how to interpolate between the aforementioned asymptotic lower bounds on $\iq(K)$ when $K$ is a convex polytope that has $\ub$ vertices and  $n+1<\ub<2n$.

More generally, one could ask how the estimate $\iq(K)\gtrsim \sqrt{n}$ in~\eqref{eq:quote isoperimetric theorem} improves when $K$ is  a convex polytope that has a fixed number of faces of a given intermediate dimension, or even while fixing its numbers of faces whose dimensions belong to given subset of $\{0,\ldots,n-1\}$; to the best of our knowledge, this question has not been broached in the literature. } 
\end{remark}

\smallskip

The reverse isoperimetric theorem~\cite{Bal91c} states that  every convex body $K\subset \R^n$ has an affine image whose isoperimetric quotient  is at most the isoperimetric quotient of the regular simplex $\triangle_n\subset \R^n$. Thus:
\begin{equation}\label{eq:state reverse iso}
\partial_K\eqdef \min_{\A\in \SL_n(\R)}\iq(\A K)\le \iq(\triangle_n)\asymp n,
\end{equation}
where $\SL_n(\R)$ denotes (as usual) the group of linear transformations of $\R^n$ whose determinant is $1$, and we adopt in~\eqref{eq:state reverse iso} the notation that was introduced in~\cite{GP99} for the affinely invariant quantity $\partial_K$.
%\footnote{See~\cite{Pet61} for a justification why the minimum in~\eqref{eq:state reverse iso} is attained.}   

 For every $n,\ub\in \N$ with $\ub\ge n+1$ there is a convex polytope $K\subset \R^n$ that has $\ub$ vertices which satisfies $\iq(\A K)\gtrsim n$ for every $\A\in \SL_n(\R)$. Similarly,   for every $n,\up\in \N$ with $\up\ge n+1$ there is a convex polytope $K\subset \R^n$ that has $\up$ facets which satisfies $\iq(\A K)\gtrsim n$ for every $\A\in \SL_n(\R)$. Both of these statements follow from a compactness argument by considering suitable perturbations of the regular simplex $\triangle_n$, which satisfies $\partial_{\triangle_n}\asymp n$; the details appear in Section~\ref{sec:proof2} below.  
 
It is thus impossible to obtain an improved reverse isoperimetric theorem in the spirit of Theorem~\ref{thm:isoperim for poly} for convex polytopes which either have  a given number of vertices or a given number of facets. Nevertheless, a  markedly different reverse isoperimetric phenomenon holds if one considers origin-symmetric convex polytopes $K\subset \R^n$ (or translates thereof), namely, those for which $-K=K$.   

By~\cite{Bal91c},  every origin-symmetric convex body $K\subset \R^n$ satisfies $\partial_K\le \partial_{[0,1]^n}=2n$, improving~\eqref{eq:state reverse iso} optimally by merely a universal constant factor, as $\partial_{\triangle_n}=(e-o(1))n$.  This might lead one to expect that the difference between the general case and the origin-symmetric case remains of this lower-order nature even when one restricts to polytopes that have either  a given number of vertices or a given number of facets. This indeed holds for polytopes with restricted number of facets, i.e., for every $\up\in 2\N$ with $\up\ge 2n$ there is an origin-symmetric convex polytope $K\subset \R^n$ that has $\up$ facets yet $\iq(\A K)\gtrsim n$ for every $\A\in \SL_n(\R)$, as seen by considering  perturbations of the hypercube $[-1,1]^n$ (details are provided in Section~\ref{sec:proof2}). However, for origin-symmetric convex polytopes with a given number of vertices, we have the following sharp asymptotic improvement over the upper bound $\partial_K\lesssim n$ in~\eqref{eq:state reverse iso}:

\begin{theorem}\label{thm:vertex reverse is} Suppose that $n\in \N$ and that $\ub\in 2\N$ satisfies $\ub\ge 2n$. Then, for every origin-symmetric  convex polytope $K\subset \R^n$  that has $\ub$ vertices there exists  $\A\in \SL_n(\R)$ such that: 
\begin{equation}\label{eq:reverse isoperim in thm}
\iq(\A K)\lesssim \min\bigg\{ {\textstyle \sqrt{n\log\frac{\upbeta}{n}}},n\bigg\}.  
\end{equation}
Furthermore,  there exists an origin-symmetric convex polytope $K\subset \R^n$ that has  $\ub$ vertices such that $\iq(\A K)$ is at least a universal constant multiple of the right hand side of~\eqref{eq:reverse isoperim in thm} for every $\A\in \SL_n(\R)$. 
\end{theorem}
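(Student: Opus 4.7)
The plan is to derive the upper bound in~\eqref{eq:reverse isoperim in thm} by reducing the isoperimetric quotient to a volumetric estimate in a John-type position, and to establish the matching lower bound by an appropriate random polytope construction.

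For the upper bound, I would first apply a linear map in $\SL_n(\R)$ so that $B^n$ is the maximum-volume ellipsoid inscribed in $K$. For origin-symmetric convex bodies, John's theorem then yields $B^n\subset K\subset \sqrt{n}\,B^n$. Since $B^n\subset K$, a cone decomposition of $K$ with apex at the origin gives $\vol_{n-1}(\partial K)\le n\vol_n(K)$, and therefore
\[
\iq(K)\le n\vol_n(K)^{1/n}.
\]
The bound $\iq(K)\lesssim n$ follows at once from $K\subset \sqrt{n}\,B^n$ together with $\vol_n(B^n)^{1/n}\asymp 1/\sqrt{n}$, recovering~\eqref{eq:state reverse iso} up to a universal constant. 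To obtain the sharper bound $\iq(K)\lesssim \sqrt{n\log(\ub/n)}$, I would exploit the vertex structure: writing $K=\conv(\pm v_1,\ldots,\pm v_{\ub/2})$ with $|v_i|\le \sqrt{n}$, the inequality reduces to the volumetric estimate
\[
\vol_n\bigl(\conv(\pm w_1,\ldots,\pm w_N)\bigr)^{1/n}\lesssim \frac{R\sqrt{\log(1+N/n)}}{n}\quad (|w_i|\le R,\ N\ge n),
\]
applied with $R=\sqrt{n}$ and $N=\ub/2$.

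For the matching lower bound, I would consider the random symmetric polytope $K=\conv(\pm v_1,\ldots,\pm v_{\ub/2})$ with $v_1,\ldots,v_{\ub/2}$ independent and uniform on $S^{n-1}$. Standard concentration estimates on the sphere show that with high probability $K$ contains $c\sqrt{\log(\ub/n)/n}\,B^n$ and is contained in $B^n$, so that the volume ratio of $K$ (an $\SL_n(\R)$-invariant quantity) is bounded below by $c'\sqrt{\log(\ub/n)}$. Combining this with an affine-invariant isoperimetric lower bound for polytopes with few vertices yields $\iq(\A K)\gtrsim \sqrt{n\log(\ub/n)}$ for every $\A\in\SL_n(\R)$. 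When $\sqrt{n\log(\ub/n)}\ge n$, a perturbation of the hypercube $[-1,1]^n$ (in the spirit of the construction for polytopes with a restricted number of facets sketched just before the theorem) already provides a symmetric polytope attaining $\partial_K\gtrsim n$.

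The main obstacle is the sharp volumetric estimate above for symmetric convex hulls of $N$ points in $R\,B^n$: the crucial feature is the logarithmic factor $\log(1+N/n)$ rather than $\log N$. A direct Urysohn--Sudakov argument controls the mean width of such a convex hull by $R\sqrt{\log N/n}$ and only yields $\iq(K)\lesssim \sqrt{n\log \ub}$, a bound that loses a $\sqrt{\log n}$ factor exactly when $\ub=O(n)$. Extracting the correct logarithm in that regime will require exploiting the ambient dimension, most naturally through the Brascamp--Lieb inequality applied to the decomposition of identity arising from the minimum-volume ellipsoid of the vertex set $\{\pm v_i\}$, in the spirit of Ball's original proof~\cite{Bal91c} of the reverse isoperimetric inequality for symmetric bodies.
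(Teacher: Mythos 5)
Your upper bound is essentially the paper's proof: pass to John position (so $B^n\subset\A K\subset\sqrt{n}B^n$), use the cone decomposition / inclusion $K\supset B^n$ to get $\iq(\A K)\le n\vol_n(\A K)^{1/n}$, and then quote the B\'ar\'any--F\"uredi volumetric estimate for symmetric convex hulls of $\ub$ points in $\sqrt{n}B^n$ to conclude. Your remark on the need for the $\log(\ub/n)$ (rather than $\log\ub$) logarithm and the pointer towards a Brascamp--Lieb-type proof of that volumetric estimate are sensible, but in the paper this is simply a citation.

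The lower bound, however, is broken: a random symmetric polytope does \emph{not} attain the claimed extremal behavior. The clearest failure is at $\ub\asymp 2^n$, where the right-hand side of~\eqref{eq:reverse isoperim in thm} is $\asymp n$. If $v_1,\ldots,v_{\ub/2}$ are i.i.d.~uniform on $S^{n-1}$ and $K=\conv(\pm v_1,\ldots,\pm v_{\ub/2})$, then with high probability $K$ is sandwiched between two concentric Euclidean balls of \emph{comparable} radii (its inradius is $\gtrsim\sqrt{\log(\ub/n)/n}\asymp 1$ while $K\subset B^n$), so $K$ is within a bounded factor of $B^n$ and hence $\partial_K\le\iq(K)\asymp\sqrt{n}$, far below the required $\asymp n$. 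In fact the random polytope satisfies $\partial_K\asymp\sqrt{n}$ for \emph{all} $\ub\ge 2n$: as $\ub$ grows it becomes rounder, not flatter. Relatedly, the inclusion chain $c\sqrt{\log(\ub/n)/n}\,B^n\subset K\subset B^n$ that you invoke bounds the volume ratio of $K$ from \emph{above} by $\lesssim\sqrt{n/\log(\ub/n)}$, not from below, and there is no affine-invariant isoperimetric lower bound that would rescue the argument. What you need is a polytope that is forced to remain ``cubical'' in every $\SL_n$-image; the paper gets this by an explicit construction (Lemma~\ref{lem:l1 sum 1}): take the $\ell_1$-sum of $\asymp n/m$ copies of the rescaled cube $m^{-1/2}[-1,1]^m$ with $m\asymp\log(\ub/n)$, verify via the Petty isotropy criterion~\eqref{isotropic requirement} that this $\ell_1$-sum is already in its minimal surface area position, and compute $\partial_K\asymp\sqrt{nm}\asymp\sqrt{n\log(\ub/n)}$ from the exact volume and surface-area formulas~\eqref{volume formula l1 sum}--\eqref{eq:surface area forluma l1 sum}.
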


As for Theorem~\ref{thm:isoperim for poly}, our proof of~\eqref{eq:reverse isoperim in thm} is a quick concatenation of  (substantial) results that are available in the literature; its details appear in Section~\ref{sec:proof2} below.

\smallskip

A conjectural~\cite{Nao24}  reverse isoperimetric phenomenon asserts that if in addition to judiciously choosing an affine image one is permitted to pass to a certain $O(1)$-perturbation of a given convex body $K\subset \R^n$, then it is always possible to arrive at a convex body whose isoperimetric quotient is $O(\sqrt{n})$, i.e., after both a prudent choice of basis and a bounded correction, {\em every} $n$-dimensional convex body behaves (in terms of isoperimetry) up to positive universal constant factors like   the Euclidean ball. A precise statement is:

\begin{conjecture}[weak isomorphic reverse isoperimetry]\label{conj:weak reverse iso} For every origin-symmetric convex body $K\subset \R^n$ there exist $\A\in \SL_n(\R)$ and an origin-symmetric convex body $L\subset \A K$ that satisfies: 
\begin{equation*}\label{eq:conclusion of reverse iso}
\vol_{n}(L)^{\frac{1}{n}}\gtrsim \vol_{n}(K)^{\frac{1}{n}}\qquad\mathrm{and}\qquad  \iq(L)\lesssim \sqrt{n}. 
\end{equation*}
\end{conjecture}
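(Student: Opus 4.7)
The plan is to begin with Milman's M-position as the most natural first attempt. First I would apply some $\A\in\SL_n(\R)$ so that $\A K$ is in M-position with respect to the Euclidean ball of the same volume: setting $r=\vol_n(K)^{1/n}/\vol_n(B^n)^{1/n}$, one has $\vol_n(\A K)=\vol_n(rB^n)$ and the covering numbers $N(\A K, rB^n)$, $N(rB^n, \A K)$ are at most $e^{Cn}$ for a universal $C>0$. My candidate would be
\[
L\eqdef \A K\cap rB^n,
\]
which is automatically an origin-symmetric convex body contained in $\A K$. To verify the volume condition, one can cover $rB^n\subset \bigcup_{j=1}^N(y_j+\A K)$ with $N\le e^{Cn}$, pick the translate whose intersection with $rB^n$ has maximal volume (at least $e^{-Cn}\vol_n(rB^n)$), and combine it with its symmetric partner via the inclusion
\[
\tfrac{1}{2}\bigl(rB^n\cap (y_j+\A K)\bigr)+\tfrac{1}{2}\bigl(rB^n\cap (-y_j+\A K)\bigr)\subset L,
\]
together with Brunn--Minkowski; this gives $\vol_n(L)^{1/n}\gtrsim \vol_n(K)^{1/n}$.

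The hard part will be bounding the isoperimetric quotient. Since $L\subset rB^n$, classical monotonicity of surface area for nested convex bodies gives $\vol_{n-1}(\partial L)\le \vol_{n-1}(\partial(rB^n))=n\vol_n(B^n)r^{n-1}\asymp \sqrt{n}\,\vol_n(K)^{(n-1)/n}$. However, the covering argument only produces $\vol_n(L)\ge c^n\vol_n(K)$ with a universal $c<1$, so $\vol_n(L)^{(n-1)/n}\ge c^{n-1}\vol_n(K)^{(n-1)/n}$, and division leaves $\iq(L)\lesssim \sqrt{n}/c^{n-1}$, which is exponentially larger than the required $\sqrt{n}$. To make this strategy work one would need the much sharper estimate $\vol_n(L)\ge (1-O(1/n))^n \vol_n(\A K)$, i.e., $L$ should essentially fill up the ball $rB^n$, which M-position does not provide for general $K$.

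A route that does succeed in a special case --- and is presumably the path in the paper for polytopes with $O(n)$ facets --- is to exploit the facet structure directly. For an origin-symmetric polytope $K=\{x:\max_{i\le m}|\langle x,u_i\rangle|\le 1\}$ with $m\le Cn$ facets, first apply a Lewis-type position to balance the normals $\{u_i\}$, and then set
\[
L\eqdef \Big\{x\in\R^n:\ \sum_{i=1}^m|\langle x,u_i\rangle|^p\le 1\Big\}
\]
with $p\asymp \log m\asymp \log n$. Known estimates for $\ell_p$-balls in this range of $p$ yield $\iq(L)\lesssim \sqrt{n}$, and one checks that $L\subset K$ with $\vol_n(L)^{1/n}\asymp \vol_n(K)^{1/n}$, completing the special case.

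The main obstacle for the full conjecture is that an arbitrary origin-symmetric convex body carries no canonical facet structure from which to manufacture such an $\ell_p$-smoothing, while as shown above the purely geometric M-position argument loses an exponential factor in the volume comparison. A hybrid approach --- for example replacing $\A K\cap rB^n$ by the Minkowski average $\tfrac{1}{2}(\A K\cap rB^n)+\tfrac{1}{2}\rho B^n$ for a small $\rho$, or passing through a Santal\'o-type symmetrization --- would have to simultaneously maintain $L\subset \A K$, sharpen the volume bound to $\vol_n(L)\ge (1-O(1/n))^n\vol_n(\A K)$, and keep the boundary at the $\sqrt n$ scale, and I do not see how to achieve all three without a genuinely new idea.
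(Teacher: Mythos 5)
This statement is an open conjecture, not a theorem of the paper: the paper only establishes the special case of convex polytopes with $O(n)$ facets (via Theorem~\ref{thm:weak reverse iso}), so no proof should be expected. Your diagnosis of the $M$-position attempt is sound --- the covering argument loses a factor $e^{-Cn}$ in volume, which after taking an $n$-th root leaves a constant gap in $\vol_n(L)^{1/n}$ that is acceptable, but your worry was really about whether $L$ fills $rB^n$ enough for the surface-area bound to survive, and indeed it does not.

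However, the ``route that succeeds'' you sketch for polytopes with $O(n)$ facets contains a genuine error, and it is also not the paper's route. Take $K=[-1,1]^n$, so after Lewis position $u_i=e_i$, $m=n$, and your candidate is $L=B_{\ell_p^n}$ with $p\asymp\log n$. For any $p\ge 2$ the circumradius of $B_{\ell_p^n}$ equals $1$, so $\vol_n(B_{\ell_p^n})=\frac{1}{n}\int_{\partial B_{\ell_p^n}}\langle x,\nu(x)\rangle\ud\vol_{n-1}(x)\le\frac{1}{n}\vol_{n-1}(\partial B_{\ell_p^n})$, giving the lower bound $\iq(B_{\ell_p^n})\ge n\vol_n(B_{\ell_p^n})^{1/n}\asymp n^{1-1/p}$. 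For $p\asymp\log n$ this is $\gtrsim n$, not $\lesssim\sqrt n$, so the claimed estimate ``$\iq(L)\lesssim\sqrt n$'' fails already for the cube. In fact no power $p$ fixes this: for $p\le 2$ one has $B_{\ell_p^n}\subset B^n$ so $\vol_n(B_{\ell_p^n})^{1/n}\lesssim n^{-1/2}\ll\vol_n([-1,1]^n)^{1/n}$, and for $p\ge 2$ the isoperimetric quotient is $\gtrsim n^{1-1/p}$. The body one needs is not an $\ell_p$-smoothing at all; in the paper's proof it is the Cheeger body of Alter--Caselles, whose existence, uniqueness and convexity are invoked nonconstructively.

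The paper's actual route (Section~\ref{sec:proof3}) is spectral, not geometric-smoothing: after symmetrizing $K$ via $K''=K'\cap(-K')$ with a Fubini lemma, one passes to the decomposition-of-identity position $\sum_i c_iu_i\otimes u_i=\Id_n$ and (i) bounds $\vol_n(\B K)^{1/n}\lesssim\sqrt{\up/n}$ by Brascamp--Lieb, (ii) bounds the first Dirichlet eigenvalue $\lambda(\B K)\lesssim\up$ by plugging the explicit polynomial test function $\f(x)=\prod_i(1-c_i\langle x,u_i\rangle^2)$ into the Rayleigh quotient, and then (iii) converts the product $\lambda\cdot\vol^{2/n}\lesssim\up^2/n$ into the desired volume and isoperimetry bounds on the Cheeger body via equation~(1.62) of~\cite{Nao24} and the Alter--Caselles uniqueness theorem. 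None of this resembles an $\ell_p$-interpolation, and crucially the output body is the (non-explicit) Cheeger minimizer rather than anything one could write down from the normals.
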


The term ``weak'' is used in~\cite{Nao24} to name Conjecture~\ref{conj:weak reverse iso}  because~\cite{Nao24} also formulates a stronger conjectural phenomenon; we do not need to recall that stronger conjecture herein since  the present article does not address it, and furthermore the above weaker version suffices for certain applications in nonlinear functional analysis and theoretical computer science; see~\cite{Nao24} for details.

The partial results towards Conjecture~\ref{conj:weak reverse iso} that are currently known can be found in~\cite{Nao24,GN25}. Here, we will prove the following statement:

\begin{theorem}\label{thm:weak reverse iso} Fix $n,\up\in \N$ with $\up\ge n+1$. For every convex polytope  that has $\up$  facets there are a vector $z\in \R^n$, a matrix $\A\in \SL_n(\R)$, and an origin-symmetric convex body $L\subset z+ \A K$ that satisfies: 
\begin{equation}\label{eq:desired conditions for isomorphic in thm}
\vol_n(L)^{\frac{1}{n}}\gtrsim \frac{n}{\up}\vol_n(K)^{\frac{1}{n}}\qquad \mathrm{and}\qquad \iq(L)\lesssim \frac{\up}{\sqrt{n}}. 
\end{equation}
\end{theorem}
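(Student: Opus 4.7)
The plan is to reduce Theorem~\ref{thm:weak reverse iso} to the $O(n)$-facet case of the weak isomorphic reverse isoperimetric phenomenon established earlier in the paper, by inscribing a simplex of comparable volume inside $K$. The $O(n)$-facet case provides, for any polytope $K_0 \subset \R^n$ with at most $O(n)$ facets, a volume-preserving image $\A_0 K_0$ and a convex body $L_0$ inside $z_0 + \A_0 K_0$ with $\iq(L_0) \lesssim \sqrt{n}$ and $\vol_n(L_0)^{1/n}$ a universal-constant multiple of $\vol_n(K_0)^{1/n}$. If this $L_0$ can be taken origin-symmetric (or made so with only constant-factor losses), then combining it with a Macbeath-type inscribed-simplex inequality gives Theorem~\ref{thm:weak reverse iso}.

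The execution runs as follows. Given a polytope $K \subset \R^n$ with $\upphi$ facets, I would apply a Macbeath-type inequality---asserting that every convex body $K \subset \R^n$ contains a simplex $S$ with $\vol_n(S)^{1/n}$ at least a positive universal constant times $\vol_n(K)^{1/n}$---to find such a simplex $S \subset K$. Since $S$ has exactly $n+1$ facets, well within the $O(n)$ regime, I apply the $O(n)$-facet case to $S$ to obtain $\A \in \SL_n(\R)$, $z \in \R^n$, and an origin-symmetric convex body $L \subset z + \A S$ with $\iq(L) \lesssim \sqrt{n}$ and $\vol_n(L)^{1/n} \gtrsim \vol_n(S)^{1/n} \gtrsim \vol_n(K)^{1/n}$. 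Since $\A$ is linear, $\A S \subset \A K$, so $L \subset z + \A K$. Using $\upphi \ge n+1$, this gives $\vol_n(L)^{1/n} \gtrsim \vol_n(K)^{1/n} \ge (n/\upphi) \vol_n(K)^{1/n}$ and $\iq(L) \lesssim \sqrt{n} \le \upphi/\sqrt{n}$, establishing the two conclusions of Theorem~\ref{thm:weak reverse iso} (in fact with stronger constants independent of $\upphi$).

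The main obstacles are twofold. First, the Macbeath-type inscribed-simplex inequality must yield $\vol_n(S)^{1/n} \gtrsim \vol_n(K)^{1/n}$ with a universal, dimension-free constant; the naive inscribed-simplex ratio can degrade for badly-shaped $K$, so one may need to first position $K$ via Ball's reverse isoperimetric theorem (or John's theorem) so that the inscribed-simplex bound kicks in uniformly. Second, the $O(n)$-facet case as summarized in the paper's abstract produces a convex body $L_0$ that is not explicitly required to be origin-symmetric, whereas Theorem~\ref{thm:weak reverse iso} requires $L$ to be origin-symmetric. If $L_0$ is not origin-symmetric, the standard symmetrization $(L_0 - c) \cap (c - L_0)$ with $c$ the centroid of $L_0$ controls volume via the Milman--Pajor inequality and surface area via monotonicity under convex inclusion, but the resulting isoperimetric quotient could degrade beyond $\sqrt{n}$; this is exactly the step where the stated bound $\iq(L) \lesssim \upphi/\sqrt{n}$ in Theorem~\ref{thm:weak reverse iso}, rather than the sharper $\sqrt{n}$, provides the necessary slack to absorb any polynomial-in-$\upphi/n$ loss.
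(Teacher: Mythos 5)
Your argument has two fatal problems. First, it is circular: the ``$O(n)$-facet case of the weak isomorphic reverse isoperimetric phenomenon'' that you invoke as a black box is not an independent prior result of the paper --- it \emph{is} Theorem~\ref{thm:weak reverse iso} specialized to $\up\lesssim n$ (this is how the abstract's claim is obtained), so applying it to the inscribed simplex $S$, which has $n+1$ facets, is exactly an instance of the statement you are supposed to prove. Nothing earlier in the paper establishes the simplex case separately; the paper proves it only through the general machinery (symmetrization $K''=K'\cap(-K')$, the Brascamp--Lieb position of Theorem~\ref{thm:spectral} giving $\vol_n(\B K'')^{1/n}\lesssim\sqrt{\up/n}$ and $\lambda(\B K'')\lesssim \up$ via the explicit test function $\prod_i(1-c_i\langle x,u_i\rangle^2)$, and the Cheeger-body reformulation from~\cite{Nao24}). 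Second, the ``Macbeath-type'' inequality on which the reduction rests is false: there is no dimension-free constant $c>0$ such that every convex body $K\subset\R^n$ contains a simplex $S$ with $\vol_n(S)^{1/n}\ge c\,\vol_n(K)^{1/n}$. Already for $K=B^n$ the largest inscribed simplex has $\vol_n(S)^{1/n}\asymp 1/n \asymp \vol_n(B^n)^{1/n}/\sqrt{n}$, i.e.\ one loses a factor $\sqrt{n}$ in volume radius (equivalently a factor like $n^{n/2}$ in volume), and in general the loss can be as bad as a factor $n$. Your proposed fix --- first putting $K$ in John or reverse-isoperimetric position --- cannot help, because the ratio $\vol_n(S)/\vol_n(K)$ over inscribed simplices is an affine invariant of $K$. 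With the correct inscribed-simplex bound your reduction would only yield $\vol_n(L)^{1/n}\gtrsim \vol_n(K)^{1/n}/\sqrt{n}$, which is weaker than the required $(n/\up)\vol_n(K)^{1/n}$ precisely in the main regime $\up=O(n)$, so the scheme fails exactly where the theorem has content.

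A useful sanity check you could have run: as written, your conclusion would give $\vol_n(L)^{1/n}\gtrsim\vol_n(K)^{1/n}$ and $\iq(L)\lesssim\sqrt{n}$ for \emph{every} polytope, with constants independent of $\up$; this would settle Conjecture~\ref{conj:weak reverse iso} for all polytopes (and, by approximation, essentially in full), which is open --- a strong signal that either the black box or the inscription step cannot be available. If you want a correct proof, follow the paper's route: reduce to an origin-symmetric polytope by intersecting a translate of $K$ with its reflection (the Fubini estimate $\sup_x\vol_n(K\cap(x-K))\ge\vol_n(K)^2/\vol_n(2K)$ shows only a constant factor in volume radius is lost, at the cost of doubling the facet count), then prove the two bounds of Theorem~\ref{thm:spectral} in the position where the weighted facet normals form a decomposition of the identity, and finally convert the resulting bound $\lambda(\A K'')\vol_n(\A K'')^{2/n}\lesssim\up^2/n$ into the existence of the origin-symmetric body $L$ via the Cheeger-body inequality (1.62) of~\cite{Nao24}, using uniqueness of the Cheeger body to guarantee its origin-symmetry.
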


Theorem~\ref{thm:weak reverse iso}   shows that Conjecture~\ref{conj:weak reverse iso} holds when $K\subset \R^n$ is a convex polytope that has $\up=O(n)$ facets. Indeed, if $K\subset \R^n$ is an origin-symmetric convex body, then the body $L$ that  Theorem~\ref{thm:weak reverse iso}   provides is, in fact, contained in $\A K$ (not only in its translate $z+\A K$) by the following straightforward argument:
$$
L=\frac12 (L+L)=\frac12(L-L)\subset \frac12 \big ((z+\A K)-(z+\A K)\big )=\frac12 (\A K-\A K)=\frac12 (\A K+\A K)=\A K,  
$$
where we used the convexity of $L$ and $\A K$ in, respectively, the first and last steps,  and we used the fact that $L$ and $\A K$ are origin-symmetric in, respectively, the second and penultimate steps. While this is modest  evidence for Conjecture~\ref{conj:weak reverse iso}, it was previously unknown, and its justification is not entirely trivial. The proof of Theorem~\ref{thm:weak reverse iso}  appears in Section~\ref{sec:proof3} below. It proceeds through a reformulation of Conjecture~\ref{conj:weak reverse iso} from~\cite[Section~1.6.1]{Nao24}  in terms of the spectrum of the  Laplacian on $K$ with Dirichlet boundary conditions, and it yields a spectral bound that improves asymptotically over the best-known bound~\cite{Nao24} if $\up =o(n\log n)$.

\subsection{Historical comments}\label{sec:history} It is quite curious that the  statement of Theorem~\ref{thm:isoperim for poly} has not been previously obtained in the literature. One possible explanation is that while such questions have been studied for a very long time, this was done with the aim to determine the exact isoperimetric minimizers (constrained to have a fixed number of facets), which turns out to be  an extremely difficult (perhaps even hopeless) goal. Aiming to understand the phenomenon up to universal constant factors opens the door to a powerful toolkit that has been developed over the past decades in the local theory of Banach spaces; modulo such known results, our proof of Theorem~\ref{thm:isoperim for poly} is short.  In one of our motivations for examining the question that Theorem~\ref{thm:isoperim for poly} answers (see below), universal constant factors do not matter. 

The extremal property of balls with respect to the isoperimetric problem was known to the ancient Greeks; for example, Zenodorus suggested an argument first proving that regular polygons are optimal in the plane, and even claimed that Euclidean balls are optimal in three dimensions; see~\cite{Bla05} for the history.
In higher dimensions,  the isoperimetric inequality for convex bodies  was proved  by the works of Steiner, Schwarz, Weierstrass and Minkowski (the history is covered in~\cite{Gru07}. Briefly,  Steiner famously provided a symmetrization method  showing that given the volume, only Euclidean balls can be the minimizers of the surface area. However, Steiner did not prove the existence of a minimizer, which was subsequently verified by Weierstrass and Schwarz). Concerning convex polytopes, Zenodorus already suggested that among planar convex polygons that have a  given number of sides, the regular ones have the minimal perimeter, but this was only proved rigorously by Weierstrass; see~\cite{Wei27}. In higher dimensions,
Steiner using his symmetrization method also proved that among simplices of given volume, the regular one has minimal surface area; see~\cite{Had57} (this also follows from~\cite{Pet61}). The literature contains very few other types of convex polytopes for which the problem has been understood. In~\cite{Fej48a,Fej64,FFK23} it was proved 
that for $\up=6,12$, among $3$-dimensional convex polytopes of given volume and having at most $\up$ facets, the ones with minimal surface area are the cube and dodecahedron, respectively. For convex polytopes in $\R^n$ that have $n+2$ vertices, the ones which minimize the isoperimetric quotient were determined in~\cite{BoB96}. For origin-symmetric polytopes in $\R^n$ that have $2n$ facets (parallelepipeds), \cite{Pet61} implies that hypercubes have minimal isoperimetric quotients. To the best of our knowledge, no other convex polytopes are known to be exact minimizers of the isoperimetric quotient while fixing their number of facets.

We were partially motivated to examine the question that Theorem~\ref{thm:isoperim for poly} answers by the recent work~\cite{NR23}, which constructs  for every $n\in \N$ a convex polytope $K\subset \R^n$ such that its integer translates $\{z+K\}_{z\in \Z^n}$ tile $\R^n$, yet $\vol_{n-1}(\partial K)=n^{1/2+o(1)}$ as $n\to \infty$; any measurable $E\subset \R^n$ such that $\{z+E\}_{z\in \Z^n}$ tile $\R^n$  must satisfy $\vol_n(E)=1$, so the aforementioned  statement from~\cite{NR23} is equivalent to $\iq(\partial K)=n^{1/2+o(1)}$. Beyond their intrinsic geometric interest, the search for such tiling bodies 
is motivated  by issues in computer science~\cite{FKO07,KORW08,KROW12}. The potential utility in this regard of the construction of~\cite{NR23} would necessitate the tiling body $K$ to have an ``efficient'' description, e.g. a constant-factor polynomial time optimization oracle (see~\cite{GLS93} for background) would be of use here (but, that is not all that would be needed for possible algorithmic implications). By Theorem~\ref{thm:isoperim for poly}, the number of facets of  $K$ must grow as $n\to \infty$ faster than any power of $n$, i.e., $K$ cannot have a short description as the intersection of $n^{O(1)}$ half-spaces. This rules out one possible approach to the aforementioned question, and it remains open to understand whether other routes towards obtaining an efficient version of a  tiling body as in~\cite{NR23} are possible.

\section{Proof of Theorem~\ref{thm:isoperim for poly} }\label{sec:proof1}

In the context of Theorem~\ref{thm:isoperim for poly}, we introduce the following notation for every $n,\up\in \N$ with $\up\ge n+1$: 
\begin{align}\label{eq:isop function notation}
\mathsf{Isoperim}_n(\up)\eqdef \inf \big\{\iq(K):\ K\subset \R^n\  \mathrm{is\ a\ convex\ polytope\ that\ has\ }\upphi\ \mathrm{facets}\big\}.
\end{align}
As discussed in Section~\ref{sec:history}, $\mathsf{Isoperim}_n(\up)$ has been computed for only  a few values of $n,\up$. The purpose of the present section is to prove Theorem~\ref{thm:isoperim for poly}, which evaluates $\mathsf{Isoperim}_n(\up)$ up to universal constant factors. Writing $\up=\varrho n$, using the notation~\eqref{eq:isop function notation} Theorem~\ref{thm:isoperim for poly} can be stated as follows:
$$
\forall n\in \N,\qquad  \mathsf{Isoperim}_n(\varrho n)\asymp \left\{\begin{array}{cl}n&\mathrm{if}\ 1+\frac{1}{n}\le \varrho\le 2,\\
\frac{n}{\sqrt{\log \varrho}}&\mathrm{if}\ 2\le \varrho\le 2^n,\\
\sqrt{n}&\mathrm{if}\ \varrho\ge 2^n.\end{array} \right.
$$

Remark~\ref{rem:approximate with more}  below records for ease of later references a standard observation that allows one to pass from isoperimetric statements about convex polytopes with at most a given number of facets or vertices to the corresponding statements for convex polytopes whose number of facets or vertices exactly equals  a given larger value; it implies in particular that   $\mathsf{Isoperim}_n(\upphi)\ge \mathsf{Isoperim}_n(\Phi)$ for $n,\up,\Phi\in \N$ with $\Phi\ge \up\ge n+1$. 

\begin{remark}\label{rem:approximate with more} {\em We will use the following very simple fact multiple times. Given $n,\up\in \N$, if $K\subset \R^n$ is a convex polytope with $\upphi$ facets (so, necessarily $\up\ge n+1$), then for every integer $\Phi\ge \upphi$ and every $\e>0$ there exists a convex polytope $L\subset K$ that has $\Phi$ facets and $\iq(K)-\e\le \iq(L)\le \iq(K)+\e$. One of multiple possible ways to  justify this  is to fix any vertex $v$ of $K$, by Hahn--Banach take $x^*\in \R^n$ and $\alpha\in \R$ such that $\langle x^*,v\rangle =\alpha$ and  $K\subset \{y\in\R^n:\ \langle x^*,y\rangle \le \alpha\}$, and for every $\d>0$ slice away from $K$ a small neighborhood of $v$ by considering the polytope $L(1,\d)=K\cap \{x\in \R^n:\ \langle x,v\rangle\le \alpha-\d\}$. Then, $\lim_{\d\to 0^+}\iq(L(1,\d))=\iq(K)$ and for small enough $\d$ the number of facets of $K$ equals $\upphi+1$. By iterating this procedure $\Phi-\upphi$ times we arrive at the desired polytope $L$. When $K$ is furthermore origin-symmetric,   $L$ can be taken to be  origin-symmetric by repeating the above with the  approximant $K\cap \{x\in \R^n:\ |\langle x,v\rangle|\le \alpha-\d\}$. Also,  if $K$ has $\ub\in \N$ vertices, then for every integer $\mathrm{B}\ge \ub$ and every $\e>0$ there is a convex polytope $L'\supseteq K$ that has $\mathrm{B}$ vertices and $\iq(K)-\e\le \iq(L')\le \iq(K)+\e$, as seen by taking any $u_1,\ldots,u_{\mathrm{B}-\ub}\in \R^n\setminus K$ that are in general position and sufficiently close to $K$, and considering $L'=\conv(K\cup\{u_1,\ldots,u_{\mathrm{B}-\ub}\})$. If $K$ is origin-symmetric and $\mathrm{B}$ is even, then $L'$ can be taken to be origin-symmetric, as seen  by considering $\conv(K\cup\{\pm u_1,\ldots,\pm u_{\mathrm{B}-\ub}\})$. 
}
\end{remark}

The following  simple lemma records  basic properties of the isoperimetric quotient: 

\begin{lemma}\label{lem:cones} Fix $n,\up\in \N$ and $h>0$.  Let $K\subset \R^n$ be a convex polytope whose facets are $F_1,\ldots,F_\upphi$. Suppose  that $K\supseteq hB^n$ and $F_1\cap hS^{n-1},\ldots,F_\upphi\cap hS^{n-1}\neq \emptyset$. Then
\begin{equation}\label{eq:iq for circumscribed}
\iq(K)=\frac{n}{h}\vol_n(K)^{\frac{1}{n}}.
\end{equation}
Furthermore, every convex body $K\subset \R^n$ for which $K\supseteq hB^n$ satisfies: 
\begin{equation}\label{eq:iq for including}
\iq(K)\le \frac{n}{h}\vol_n(K)^{\frac{1}{n}}.
\end{equation}
\end{lemma}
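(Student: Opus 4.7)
The plan is to derive both identities from the standard coning/divergence formula
$$
\vol_n(K)=\frac{1}{n}\int_{\partial K}\langle x,\nu(x)\rangle\ud\vol_{n-1}(x),
$$
where $\nu(x)$ is an outer unit normal at $x\in\partial K$. For a convex body with $0$ in its interior this is just the divergence theorem applied to the radial field $x$, or equivalently the observation that $K$ is the union (up to a null set) of cones from the origin over the boundary pieces, a cone whose base has area $\ud\vol_{n-1}$ and whose height is the support-function value $\langle x,\nu(x)\rangle$.

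For~\eqref{eq:iq for circumscribed}, I would first use the assumption $hB^n\subseteq K$ and $F_i\cap hS^{n-1}\neq\emptyset$ to conclude that each facet $F_i$ lies on a supporting hyperplane that is tangent to $hS^{n-1}$, hence at Euclidean distance exactly $h$ from the origin. Equivalently, if $\nu_i$ is the unit outer normal to $F_i$, then $\langle x,\nu_i\rangle=h$ for every $x\in F_i$. Consequently the pyramid over $F_i$ with apex at the origin has volume $(h/n)\vol_{n-1}(F_i)$, and summing over $i$ yields
$$
\vol_n(K)=\sum_{i=1}^{\up}\frac{h}{n}\vol_{n-1}(F_i)=\frac{h}{n}\vol_{n-1}(\partial K).
$$
Plugging into the definition~\eqref{eq:def iq} of the isoperimetric quotient gives~\eqref{eq:iq for circumscribed}.

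For~\eqref{eq:iq for including}, I would start from the inclusion $hB^n\subseteq K$ and the support-function characterization: at every point $x\in\partial K$ that admits an outer unit normal $\nu(x)$ the hyperplane $\{y\in\R^n:\langle y,\nu(x)\rangle=\langle x,\nu(x)\rangle\}$ supports $K$, so $\sup_{y\in hB^n}\langle y,\nu(x)\rangle\le\langle x,\nu(x)\rangle$, i.e.\ $h\le\langle x,\nu(x)\rangle$. Substituting this pointwise inequality into the coning formula gives $\vol_n(K)\ge(h/n)\vol_{n-1}(\partial K)$, which rearranges to~\eqref{eq:iq for including}.

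I do not anticipate any real obstacle; the only delicate point is making sure the coning identity is used at the correct level of generality. In the polytope setting it is literally a decomposition into finitely many simplicial pyramids, so no regularity subtleties arise. In the general convex-body setting it suffices to know that $\partial K$ admits an outer unit normal $\vol_{n-1}$-almost everywhere and that the divergence-type identity for the radial field holds on convex bodies, both of which are standard and can be quoted (e.g., from Schneider's monograph on the Brunn--Minkowski theory). Uniqueness of the normal is not needed for the inequality, since at any choice of outer normal the bound $h\le\langle x,\nu(x)\rangle$ still holds.
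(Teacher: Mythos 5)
Your proof of~\eqref{eq:iq for circumscribed} is essentially the paper's: decompose $K$ into pyramids over the facets with apex at the origin, use $\langle x,\nu_i\rangle=h$ on each facet to get cone volume $(h/n)\vol_{n-1}(F_i)$, and sum. The only cosmetic difference is that you frame it via the divergence/coning identity $\vol_n(K)=\frac1n\int_{\partial K}\langle x,\nu(x)\rangle\ud\vol_{n-1}(x)$, whereas the paper directly identifies $\conv(\{0\}\cup F_i)$ as a cone of height $h$ and base $F_i$.

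For~\eqref{eq:iq for including} you take a genuinely different route, and it is correct. The paper argues via a Minkowski-sum estimate: $K+sB^n\subset(1+s/h)K$ from convexity and $hB^n\subset K$, then differentiates $\vol_n(K+sB^n)$ at $s=0$ to bound the surface area. You instead keep the coning identity and feed in the pointwise support-function inequality $\langle x,\nu(x)\rangle\ge\sup_{y\in hB^n}\langle y,\nu(x)\rangle=h$ valid $\vol_{n-1}$-a.e.\ on $\partial K$, obtaining $\vol_n(K)\ge(h/n)\vol_{n-1}(\partial K)$ directly. Your argument is more geometrically transparent and unifies the two halves of the lemma under a single formula, at the cost of invoking a.e.\ differentiability of convex boundaries and the divergence theorem on Lipschitz domains (standard, as you note, and quotable from Schneider). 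The paper's Minkowski-sum route is ``softer'' in that it needs no regularity facts beyond the definition of surface area as the one-sided volume derivative, which is why the paper can dispatch it in two lines and defer the details to~\cite[Lemma~3]{NR23}. Both are fine; yours is arguably the more self-contained once the coning identity is in hand.
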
 

\begin{proof} The identity~\eqref{eq:iq for circumscribed} is a consequence of the following obvious computation. By assumption, for every $i\in \{1,\ldots,\upphi\}$ there is $u_i\in hS^{n-1}$ such that $F_i\cap hS^{n-1}=\{u_i\}$. As $K\supseteq h B^n$, we necessarily have $F_i\subset u_i+u_i^\perp$. Hence, $\conv (\{0\}\cup F_i)$ is a cone whose cusp is at the origin $0\in \R^n$ and whose height equals $\|u_i\|_{\ell_{\!\!2}^n}=h$. The volume of $\conv (\{0\}\cup F_i)$ therefore equals $h\vol_{n-1}(F_i)/n$. As $\conv (\{0\}\cup F_1),\ldots,\conv (\{0\}\cup F_\up)$ have pairwise disjoint interiors and their union equals $K$, and also any two of the facets $F_1,\ldots,F_\up$ intersect in a set of codimension at least $2$ and their union equals $\partial K$, the desired identity~\eqref{eq:iq for circumscribed} is justified as follows: 
\begin{align*}
\vol_n(K)=\vol_{n} \Big(\bigcup_{i=1}^\upphi &\conv (\{0\}\cup F_i)\Big)=\sum_{i=1}^\upphi \vol_n\big(\conv (\{0\}\cup F_i)\big)\\&=\frac{h}{n}\sum_{i=1}^\upphi \vol_{n-1}(F_i)=\frac{h}{n}\vol_{n-1} \Big(\bigcup_{i=1}^\upphi F_i\Big)\stackrel{\eqref{eq:def iq}}{=}\frac{h}{n}\vol_{n-1}(\partial K)=\frac{h}{n}\iq(K)\vol_n(K)^{\frac{n-1}{n}}. 
\end{align*}

The general estimate~\eqref{eq:iq for including} is also known; see e.g.~\cite[Lemma~3]{NR23} for its proof. Very briefly,~\eqref{eq:iq for including} holds as:
\begin{equation}\label{convexity inclusion}
\forall s>0,\qquad K+sB^n\subset K+\frac{s}{h}K= \Big(1+\frac{s}{h}\Big)\bigg(\frac{1}{1+\frac{s}{h}}K+\Big(1-\frac{1}{1+\frac{s}{h}}\Big)K\bigg)=\Big(1+\frac{s}{h}\Big)K, 
\end{equation}
where the first inclusion in~\eqref{convexity inclusion} uses the assumption $K\subset h B^n$ and the last equality in~\eqref{convexity inclusion} is the crucial point where the assumed convexity of $K$  is used. The desired bound~\eqref{eq:iq for including} is now justified as follows:
\begin{equation*}
\iq(K)=\lim_{s\to 0^+} \frac{\vol_{n} (K+sB^n)-\vol_n(K)}{s\vol_n(K)^{\frac{n-1}{n}}}\stackrel{\eqref{convexity inclusion} }{\le} \limsup_{s\to 0^+} \frac{\left(1+\frac{s}{h}\right)^n-1}{s} \vol_n(K)^{\frac{1}{n}}=\frac{n}{h}\vol_n(K)^{\frac{1}{n}}.\tag*{\qedhere}
\end{equation*}
\end{proof}

For examples: the cross polytope $B_{\!\ell_{\!\!1}^n}\subset \R^n$ satisfies the assumption of Lemma~\ref{lem:cones} with $h=1/\sqrt{n}$ and its volume equals $2^n/n!$;  the  regular simplex with unit side length $\triangle_n\subset \R^n$ satisfies the assumption of Lemma~\ref{lem:cones} with $h=1/\sqrt{2n(n+1)}$ and its volume equals $2^{-n/2}\sqrt{n+1}/n!$.  Consequently, in these special cases~\eqref{eq:iq for circumscribed} becomes  the following more precise versions of the asymptotic behaviors of the isoperimetric quotients of the the cross-polytope and the regular simplex that were mentioned in the Introduction: 
\begin{equation}\label{eq:iq simplex and cross polytope}
\iq(\triangle_n)=\frac{n^\frac32(n+1)^{\frac12+\frac{1}{2n}}}{\sqrt[n]{n!}}= \big(e+o(1)\big)n\qquad\mathrm{and}\qquad  \iq(B_{\!\ell_{\!\! 1}^n})=\frac{2n^{\frac32}}{\sqrt[n]{n!}}=\big(2e+o(1)\big){\textstyle \sqrt{n}}.
\end{equation}

Prior to justifying the improved isoperimetric inequality~\eqref{eq:iq lower in thm} for polytopes, which is the more interesting part of Theorem~\ref{thm:isoperim for poly}, we will first quickly explain why the asymptotic lower bound on $\mathsf{Isoperim}_n(\up)$ in Theorem~\ref{thm:isoperim for poly} cannot be imporved for any $n,\up\in \N$ with $\up\ge n+1$. 

As the regular simplex in $\R^n$  has $n+1$ facets, $\mathsf{Isoperim}_n(\upphi)\le \iq(\Delta_n)\asymp n$  for every $\upphi\in \{n+1,n+2,\ldots\}$ by Remark~\ref{rem:approximate with more} and the first part of~\eqref{eq:iq simplex and cross polytope}. Thus,   $\mathsf{Isoperim}_n(\upphi)$ is at most a universal constant multiple of the right hand side of~\eqref{eq:iq lower in thm} for every $\up\in \N$ satisfying, say,  $n+1\le \upphi\le 3n$. As the  cross-polytope in $\R^n$ has $2^n$ facets, $\mathsf{Isoperim}_n(\upphi)\le \iq(B_{\!\ell_{\!\! 1}^n})\asymp \sqrt{n}$  for every integer $\upphi\ge 2^n$ by Remark~\ref{rem:approximate with more} and the second part of~\eqref{eq:iq simplex and cross polytope}. Hence, $\mathsf{Isoperim}_n(\upphi)$ is at most a universal constant multiple of the right hand side of~\eqref{eq:iq lower in thm}  also if $\upphi\ge 2^n$.

We may thus assume that $3n< \upphi< 2^n$. Since $\upphi/n> 3$, we can define  $m=m(\upphi,n)$ to be the largest element of $\{2,3,\ldots\}$ for which $\upphi/n> 2^m/m$. Then, $m\asymp \log(\upphi/n)$. Also $n\ge m+1$, as  $\{2^k/k: k\in \N\}$ is nondecreasing, so if  $m\ge n$, then 
$2^m/m\ge 2^n/n> \upphi/n$, where the last step uses the assumed upper bound on $\upphi$, in contradiction to our choice of $m$. So, we can write $n=am+r$ for some $a\in \N$ and $r\in \{1,\ldots,m\}$. 

Denote $b= \upphi -a2^m= \upphi -2^m(n-r)/m> n2^m/m-2^m(n-r)/m=r2^m/m\ge 2^r$, where the second step is a substitution of the definition of $a$, the third step uses  the definition of $m$, and the final step is holds because $1\le r\le m$ and $\{2^k/k: k\in \N\}$ is nondecreasing. By the above discussion, since $b\ge 2^r$ there exists a convex polytope $L\subset \R^r$ with $\vol_r(L)=1$ that has $b$ facets and satisfies $\iq(L)\asymp \sqrt{r}$.

We can now define a convex polytope $K\subset \R^n$ as follows: 
$$
K\eqdef \left(\frac{1}{\vol_m(B_{\!\ell_{\!\! 1}^m})^{\frac{1}{m}}}B_{\!\ell_{\!\! 1}^m}\right)^a\times L\subset (\R^m)^a\times \R^r\cong \R^{am+r}=\R^n. 
$$
The number of facets of $K$ equals $a2^m+b=\upphi$, by the definition of $b$ and the choice of $L$, and because the number of facets of $B_{\!\ell_{\!\! 1}^m}$ equals $2^m$.  We furthermore have $\vol_n(K)=1$ and 
$$
\iq(K)=\vol_{n-1}(\partial K)=a\iq(B_{\!\ell_{\!\! 1}^m})+\iq(L)\asymp a\sqrt{m}+\sqrt{r}\le 2\frac{am+r}{\sqrt{m}}= 2\frac{n}{\sqrt{m}}\asymp \frac{n}{\sqrt{\log \frac{\upphi}{n}}}.
$$

Having verified that there is a convex polytope with $\up$ facets for which the isoperimetric lower bound~\eqref{eq:iq lower in thm} of Theorem~\ref{thm:isoperim for poly}  cannot be improved, we will next explain why the isoperimetric quotient of every such $K$  must satisfy~\eqref{eq:iq lower in thm}. In the proof  we will use the following old theorem of Lindel\"of~\cite{Lin69}, which implies in particular that the minimum isoperimetric quotient among all the convex polytopes with a fixed number of facets is attained at a convex polytope  which is circumscribed around some Euclidean ball. Because Lindel\"of's theorem may not be very well-known, we will provide its proof below.

\begin{theorem}[Lindel\"of]\label{thm:lindelof}
\label{Lindelof} Fix $n,\upphi\in \N$ and let   $u_1,\ldots,u_\upphi\in S^{n-1}$ be distinct unit vectors that are not contained in any closed hemisphere of $S^{n-1}$, i.e., $\{u_1,\ldots,u_\upphi\} \not\subseteq  \{x\in \R^n: \langle x,v\rangle\le 0\}$ for every $v\in S^{n-1}$.  If $K\subset \R^n$ is a convex polytope such that the unit outer normal to each of its facets belongs to $\{u_1,\ldots,u_\upphi\}$, then 
\begin{equation}\label{eq:K0}\iq(K)\ge \iq (K_0),\qquad\mathrm{where}\qquad K_0=K_0(u_1,\ldots,u_\upphi)\eqdef \Big\{x\in \R^n:\ \max_{i\in \{1,\ldots,\upphi\}} \langle x,u_i\rangle \le 1\Big\}.
\end{equation}
\end{theorem}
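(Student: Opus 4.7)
The plan is to interpret $\vol_{n-1}(\partial K)$ as a multiple of a mixed volume involving $K_0$, and then to invoke Minkowski's first inequality to bound it below in terms of $\vol_n(K)$ and $\vol_n(K_0)$. The whole argument hinges on the observation that the support function $h_{K_0}(u)\eqdef \max_{x\in K_0}\langle x,u\rangle$ satisfies $h_{K_0}(u_i)=1$ for every $i\in \{1,\ldots,\upphi\}$: the upper bound $h_{K_0}(u_i)\le 1$ is immediate from the defining inequalities of $K_0$, while the matching bound $h_{K_0}(u_i)\ge \langle u_i,u_i\rangle=1$ uses the inclusion $u_i\in B^n\subseteq K_0$, itself a consequence of $\langle x,u_j\rangle\le \|x\|_{\ell_{\!\!2}^n}\le 1$ for every $x\in B^n$ and every $j\in\{1,\ldots,\upphi\}$. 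The hypothesis that $\{u_1,\ldots,u_\upphi\}$ is not contained in any closed hemisphere ensures that $K_0$ is bounded, and combined with $B^n\subseteq K_0$ this makes $K_0$ a convex body. Furthermore, since each facet of $K_0$ with outer normal $u_i$ meets $S^{n-1}$ at $u_i$, Lemma~\ref{lem:cones} applied with $h=1$ yields $\iq(K_0)=n\vol_n(K_0)^{1/n}$.

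The next step is to use the standard first-variation formula for polytope volume under Minkowski addition: if $A\subset\R^n$ is a convex polytope with facets $F_1,\ldots,F_k$ of outer unit normals $v_1,\ldots,v_k$, and $L\subset\R^n$ is any convex body, then
\begin{equation}\label{eq:plan mixed volume}
nV(A[n-1],L)\eqdef \frac{\dd}{\dd s}\bigg|_{s=0^+}\!\!\vol_n(A+sL)=\sum_{j=1}^k h_L(v_j)\vol_{n-1}(F_j),
\end{equation}
which follows from decomposing $A+sL$ into $A$ together with, over each facet $F_j$, a prism-like slab of first-order thickness $s h_L(v_j)$, the remaining ``corner'' pieces contributing only $O(s^2)$ to the volume. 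Applying \eqref{eq:plan mixed volume} with $A=K$ and $L=K_0$, and using that each $v_j$ belongs to $\{u_1,\ldots,u_\upphi\}$ by hypothesis so that $h_{K_0}(v_j)=1$ by the previous paragraph, the right-hand side of \eqref{eq:plan mixed volume} collapses to $\sum_{j=1}^k\vol_{n-1}(F_j)=\vol_{n-1}(\partial K)$. This yields the key identity $\vol_{n-1}(\partial K)=nV(K[n-1],K_0)$.

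The conclusion then follows from Minkowski's first inequality
$$V(K[n-1],K_0)\ge \vol_n(K)^{\frac{n-1}{n}}\vol_n(K_0)^{\frac{1}{n}},$$
which is obtained by raising the Brunn--Minkowski inequality $\vol_n(K+sK_0)^{1/n}\ge \vol_n(K)^{1/n}+s\vol_n(K_0)^{1/n}$ to the $n$-th power and comparing the first-order Taylor coefficients in $s$ at $s=0^+$. Substituting this bound into the identity from the previous paragraph and dividing both sides by $\vol_n(K)^{(n-1)/n}$ produces the desired inequality $\iq(K)\ge n\vol_n(K_0)^{1/n}=\iq(K_0)$.

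The main obstacle is essentially none: once one imports Brunn--Minkowski (equivalently, Minkowski's first inequality)---the only input external to the excerpt---the whole argument reduces to the simple but crucial observation that $h_{K_0}$ takes the constant value $1$ on every prescribed direction $u_i$, which is precisely what makes $K_0$ the extremal test body against which $K$ is paired in the mixed volume. The mildly technical step is verifying the first-variation formula \eqref{eq:plan mixed volume}, but this is a standard elementary polyhedral computation, and can equally well be taken as the definition of $V(A[n-1],L)$ via the polynomial expansion of $s\mapsto\vol_n(A+sL)$.
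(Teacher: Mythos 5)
Your proof is correct, and it is essentially the same argument as the paper's, just phrased in the standard language of mixed volumes. The paper's inequality
\[
\d\,\vol_{n-1}(\partial K)\le \vol_n\bigl((K+\d K_0)\setminus K\bigr)\le \d\,\vol_{n-1}(\partial K)+C\d^2
\]
is precisely the by-hand verification, in this special case, of the first-variation formula you cite as known; the crucial point in both versions is the observation that $h_{K_0}$ equals $1$ on every admissible facet normal, which is why the linear coefficient collapses to $\vol_{n-1}(\partial K)$. After that, the paper applies Brunn--Minkowski directly, while you apply Minkowski's first inequality (itself the infinitesimal form of Brunn--Minkowski); these are the same estimate. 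You also correctly invoke Lemma~\ref{lem:cones} for $\iq(K_0)=n\vol_n(K_0)^{1/n}$, matching the paper. The only thing the paper does that your sketch elides is the elementary but slightly fiddly justification that the error terms in the first variation are $O(\d^2)$, coming from the codimension-$2$ ``corner'' pieces; the authors carry this out in full detail with the sets $W_{ij}^\perp+w_{ij}+\cdots$ presumably because Lindel\"of's theorem is not easily citable in this form, while you appeal to it as a standard polyhedral computation, which is perfectly acceptable. In short: same proof, different level of self-containment.
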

\begin{proof} We are assuming that $u_1,\ldots,u_\upphi$ are not contained in any close hemisphere of $S^{n-1}$ to ensure that the convex polytope $K_0$ that is defined in~\eqref{eq:K0} is bounded.
%\footnote{Otherwise, for every $m\in \N$ there would exist $x_m\in K_0$ satisfying $\|x_m\|_{\!\ell_{\!\!2}^n}\ge m$. Considering the unit vector $v_m=x_m/\|x_m\|_{\!\ell_{\!\!2}^n}$, we thus have  $\max_{i\in \{1,\ldots\up\}}\langle v_m,u_i\rangle \le 1/m$  by the definition of $K_0$. By passing to a subsequence of $\{v_m\}_{m=1}^\infty$ that converges to $v\in S^{n-1}$, we conclude that $\{u_1,\ldots,u_\upphi\}$ is a subset of the hemisphere  $S^{n-1}\cap \{x\in \R^n: \langle x,v\rangle\le 0\}$.} 
The key (simple and elementary) observation is that the assumptions of Theorem~\ref{thm:lindelof} imply that there exist $\d_0=\d_0(K), C=C(K,u_1,\ldots,u_\upphi)>0$ such that: 
\begin{equation}\label{eq:parallel surface}
\forall 0\le \d\le  \d_0,\qquad \d\vol_{n-1}(\partial K)\le \vol_n\big((K+\d K_0)\setminus K\big)\le \d\vol_{n-1}(\partial K)+C\d^2.
\end{equation}

After~\eqref{eq:parallel surface} will be verified (below), Theorem~\ref{thm:lindelof} will quickly follow as $0\in K_0$, so $K+\d K_0\supseteq K$ for every $\d>0$, whence by the Brunn--Minkowski inequality~\cite{Bru89,Min03} (see e.g.~\cite{Bal97,Sch14}) the following estimate holds:
\begin{align}\label{eq:use BM}
\begin{split}
\vol_n\big((K+\d K_0)&\setminus K\big)=\vol_n(K+\d K_0)-\vol_n(K)\\&\ge \left(\vol_n(K)^{\frac{1}{n}}+\d\vol_n(K_0)^{\frac{1}{n}}\right)^n-\vol_n(K)=\Big(\vol_n(K)^{\frac{1}{n}}+\frac{\d}{n}\iq(K_0)\Big)^n-\vol_n(K),
\end{split}
\end{align}
where the last step of~\eqref{eq:use BM} is an instantiation of (the first part of) Lemma~\ref{lem:cones}, whose assumptions hold for $K_0$ with $h=1$. The derivation of conclusion~\eqref{eq:K0} of Theorem~\ref{thm:lindelof}  is now concluded as follows:
$$
\forall 0<\d\le \d_0,\qquad \iq(K)\stackrel{\eqref{eq:def iq}\wedge\eqref{eq:parallel surface}\wedge\eqref{eq:use BM} }{\ge} \frac{\Big(1+\frac{\d\iq(K_0)}{n\vol_n(K)^{\frac{1}{n}}}\Big)^n-1}{\d}\vol_n(K)^{\frac{1}{n}}-\frac{C\d}{\vol_n(K)^{\frac{n-1}{n}}}\xrightarrow[\d\to 0]{} \iq(K_0).  
$$

It remains to explain why~\eqref{eq:parallel surface} holds; we used only the second inequality in~\eqref{eq:parallel surface} to prove Theorem~\eqref{thm:lindelof}, but both of the inequalities in~\eqref{eq:parallel surface} are quick to verify. The assumption on $K$ means that there exists a subset  $I$ of $\{1,\ldots,\upphi\}$ and for every $i\in I$ there are $\ut_i\in \R$ such that 
\begin{equation}\label{eq: K as intersection}
K=\bigcap_{i\in I} \big\{x\in \R^n:\ \langle x,u_i\rangle \le \ut_i\big\}.\end{equation}
Furthermore, there is $\d_0=\d_0(K)>0$ such that if $i,j\in I$ satisfy $u_j=-u_i$, then $\ut_i+\ut_j\ge \d_0$, since otherwise  the right hand side of~\eqref{eq: K as intersection} would be contained in the hyperplane $\{x\in \R^n:\ \langle x,u_i\rangle=\tau_i\}$. 
%in contradiction to our convention herein that all convex polytopes have nonempty interior. 

For each $i\in I$, let $F_i\subset \partial K$ denote the facet of $K$  whose unit outer normal is $u_i$. Thus, 
\begin{equation}\label{eq:facet of K}
F_i=\big\{x\in \R^n:\ \langle x,u_i\rangle = \ut_i\big\}\bigcap \left(\bigcap_{j\in I\setminus \{i\}} \big\{x\in \R^n:\ \langle x,u_j\rangle \le \ut_j\big\}\right).
\end{equation}
Since the definition of $K_0$ in~\eqref{eq:K0} implies that $[0,1]u_i\subset K_0$ for every $i\in I$, we have the following inclusion: 
\begin{equation}\label{eq:include face slabs}
\forall \d\ge 0, \qquad (K+\d K_0)\setminus K\supseteq \bigcup_{i\in I} \big(F_i+[0,\d]u_i\big). 
\end{equation}
For $\d \ge 0$ and $i\in I$,  $\vol_n(F_i+[0,\d]u_i)=\d\vol_{n-1}(F_i)$, as  $u_i$ is normal to $F_i$. The interiors of $\{F_i+[0,\d]u_i\}_{i\in I}$ are disjoint and $\sum_{i\in I} \vol_{n-1}(F_i)=\vol_{n-1}(\partial K)$, so the first inequality in~\eqref{eq:parallel surface} follows from~\eqref{eq:include face slabs}.

To verify the second inequality in~\eqref{eq:parallel surface},  for every $i,j\in I$ such that $u_j\notin \{u_i,-u_j\}$, whence $u_i,u_j$ are linearly independent and $-1<\langle u_i,u_j\rangle<1$, let $W_{ij}\subset \R^n$ denote the span of $u_i,u_j$.  Also, let $w_{ij}$ be the unique vector in $W_{ij}$ satisfying $\langle w_{ij},u_i\rangle=\ut_i$ and $\langle w_{ij},u_j\rangle=\ut_j$.\footnote{Explicitly, $w_{ij}=\frac{\ut_i-\ut_j\langle u_i,u_j\rangle}{1-\langle u_i,u_j\rangle^2}u_i+\frac{\ut_j-\ut_i\langle u_i,u_j\rangle}{1-\langle u_i,u_j\rangle^2}u_j$, though an exact expression for $w_{ij}$ is not needed for the reasoning herein.} 

With these notations, the following counterpart to~\eqref{eq:include face slabs} holds:  
\begin{equation}\label{eq:reverse inclusion}
\forall 0\le \d\le \d_0,\qquad (K+\d K_0)\setminus K\setminus \bigcup_{i\in I} \big(F_i+[0,\d]u_i\big) \subset \bigcup_{\substack{i,j\in I\\ u_j\neq\pm u_i}}\Big(W_{ij}^\perp+ w_{ij}+\frac{2\d}{1-|\langle u_i,u_j\rangle|} W_{ij}\cap B^n\Big).
\end{equation}
After~\eqref{eq:reverse inclusion} will be checked (below), the second inequality in~\eqref{eq:parallel surface} would be deduced as follows: 
\begin{align}\label{eq:sum over codim 2}
\begin{split}
\vol_n\big((K+\d K_0)\setminus K\big)&\stackrel{\eqref{eq:reverse inclusion}}{\le} \d\sum_{i\in I} \vol_{n-1}(F_i)+\sum_{\substack{i,j\in I\\ u_j\neq\pm u_i}} \vol_{n-2} \big(W_{ij}^\perp\cap (K+\d_0 K_0)\big)\frac{4\d^2}{(1-|\langle u_i,u_j\rangle|)^2}\vol_2(B^2)\\ &\ =\d\vol_{n-1}(\partial K)+4\pi\bigg(\sum_{\substack{i,j\in I\\ u_j\neq\pm u_i}} \frac{\vol_{n-2} \big(W_{ij}^\perp\cap (K+\d_0 K_0)\big)}{(1-|\langle u_i,u_j\rangle|)^2}\bigg)\d^2,
\end{split} 
\end{align}
which indeed gives~\eqref{eq:parallel surface} as $K+\d_0K_0$ is bounded, so each of the summands appearing in~\eqref{eq:sum over codim 2}  is finite. 

It remains to verify~\eqref{eq:reverse inclusion}. Suppose that $0<\d\le \d_0$ and  $x$ belongs to the left hand side of~\eqref{eq:reverse inclusion}. Then $x\notin K$, so by the representation of $K$ in~\eqref{eq: K as intersection}  there exists $i\in I$ such that $\langle x,u_i\rangle>\ut_i$. Also,  $x\in K+\d K_0$, so by the representation of $K$ in~\eqref{eq: K as intersection} and the definition of $K_0$ in~\eqref{eq:K0} we know that $\langle x,u_j\rangle \le \ut_j+\d$ for every $j\in I$. Thus, writing $\upsigma =\langle x,u_i\rangle-\ut_i$, we have $0<\upsigma\le \d$. Thanks to the assumed  membership of $x$ in the left hand side of~\eqref{eq:reverse inclusion}, necessarily  $x-\upsigma u_i\notin F_i$. But $\langle x-\upsigma u_i,u_i\rangle =\ut_i$ by the definition of $\upsigma$, so by the representation~\eqref{eq:facet of K} of $F_i$ this entails that there is $j\in I\setminus \{i\}$ for which $\langle x-\upsigma  u_i,u_j\rangle>\ut_j$. In summary, we have $\ut_j- \d\le \ut_j-\upsigma\langle u_i,u_j\rangle < \langle  x,u_j\rangle \le \ut_j+\d$ and $\ut_i<\langle x,u_i\rangle \le \ut_i+\d$. This implies that $u_j\neq u_i$ because otherwise it would follow that $\ut_i<\langle x,u_i\rangle <-\ut_j+\d_0\le -\ut_j+\d_0$, in contradiction to the definition of $\d_0$. Hence, the subspace $W_{ij}$ is defined per the notation that was introduced above. Let $y$ be the orthogonal projection of $x$ to $W_{ij}$. Then, $x-y$ is perpendicular to  $u_i$ and $u_j$, so $\langle y-w_{ij},u_i\rangle=\langle x,u_i\rangle -\ut_i\in   (0,\d]$ and $\langle y-w_{ij},u_j\rangle=\langle x,u_i\rangle -\ut_j\in   (-\d,\d]$, as by definition $\langle w_{ij},u_i\rangle=\ut_i$ and $\langle w_{ij},u_j\rangle=\ut_j$.  It is straightforward to compute that every $v\in W_{ij}$ satisfies $$
\|v\|_{\!\ell_{\!\!2}^n}=\sqrt{\frac{\langle v,u_i\rangle^2+\langle v,u_i\rangle^2-2\langle u_i,u_j\rangle\langle v,u_i\rangle\langle v,u_j\rangle} {1-\langle u_i,u_j\rangle^2}}.
$$
Since $y,w_{ij}\in W_{ij}$, we may apply this identity to $y-w_{ij}$ in combination with the aforementioned bounds on $\langle v,u_i\rangle,\langle v,u_j\rangle$ to deduce that the $\ell_{\!\!2}^n$ distance between $y$ and $w_{ij}$ is at most $2\d^2/(1-|\langle u_i,u_j\rangle|)$. As $y$ is the orthogonal projection of $x$ onto $W_{ij}$, this means that $x\in W_{ij}^\perp+w_{ij}+(2\d^2/(1-|\langle u_i,u_j\rangle|))B^n\cap W_{ij}$.\end{proof}

We can now complete the derivation of~\eqref{eq:iq lower in thm}, which is the main part of Theorem~\ref{thm:isoperim for poly}:  

\begin{proof}[Proof of~\eqref{eq:iq lower in thm}] By~\cite{CP88,Glu88} (see also the exposition in~\cite[Theorem~8]{Bal01}), every $u_1,\ldots,u_\up\in S^{n-1}$ satisfy:   
\begin{equation}\label{eq:quote CP}
\vol_n\big(\{x\in \R^n:\ \max_{i\in \{1,\ldots,\up\}} |\langle x,u_i\rangle|\le 1\}\big)^{\frac{1}{n}}\gtrsim \frac{1}{\sqrt{1+\log \frac{\up}{n}}}.
\end{equation}
If  $K\subset \R^n$ is a convex polytope that has $\up$ facets, then let $u_1,\ldots,u_\up\in S^{n-1}$ be the unit outer normals to its facets. By combining Theorem~\ref{thm:lindelof} with Lemma~\ref{lem:cones} it follows that:
\begin{align*}
\iq(K)\ge \iq \big(\{x\in \R^n:\ \max_{i\in \{1,\ldots, \up\}} \langle x,u_i\rangle\le 1\}\big)&\stackrel{\eqref{eq:iq for circumscribed}}{=}n\vol_n\big(\{x\in \R^n:\ \max_{i\in \{1,\ldots,\up\}} \langle x,u_i\rangle\le 1\}\big)^{\frac{1}{n}}\\&\ge n\vol_n\big(\{x\in \R^n:\ \max_{i\in \{1,\ldots,\up\}} |\langle x,u_i\rangle|\le 1\}\big)^{\frac{1}{n}}\stackrel{\eqref{eq:quote CP}}{\gtrsim} \frac{n}{\sqrt{1+\log \frac{\up}{n}}}.
\end{align*}
The remaining part $\iq(K)\gtrsim \sqrt{n}$ of~\eqref{eq:iq lower in thm} is a special case of the ``vanilla'' isoperimetric theorem~\eqref{eq:quote isoperimetric theorem}.  
\end{proof}

\begin{remark} {\em Theorem~\ref{thm:isoperim for poly} can be used to slightly streamline  the proof that for every $n\in \N$ there is a convex body $K\subset \R^n$ of volume $1$ such that the  area of its orthogonal projection onto {\em every} hyperplane is at least a positive universal constant multiple of $\sqrt{n}$. The existence of this pathological body is due to~\cite{Bal91d}; we will next justify why it holds using the same principles as in the reasoning of~\cite{Bal91d}, except that the calculation in its punchline can  now be done automatically by appealing to (the case $\up=4n$ of) Theorem~\ref{thm:isoperim for poly}. 

By~\cite{FLM77,Kasref} there are $v_1,\ldots,v_{2n}\in S^{n-1}$ such that $\sum_{i=1}^{2n} |\langle \theta,v_i\rangle| \gtrsim \sqrt{n}$ for every $\theta\in S^{n-1}$. By Minkowski's existence theorem~\cite{Min03}, there is an origin-symmetric convex polytope $K\subset \R^n$ with  $\vol_n(K)=1$ that has $\up=4n$ facets  $\pm F_1,\ldots,\pm F_{2n}$ satisfying  $\vol_{n-1}({F_1})=\ldots=\vol_{n-1}({F_{2n}})$, and  for every $i\in \{1,\ldots,2n\}$ the unit outer normal to $F_i$ equals $v_i$. Given $\theta\in S^{n-1}$,  the $(n-1)$-dimensional volume of the orthogonal projection of $K$ onto the hyperplane $\theta^\perp$ equals $\sum_{i=1}^{2n}\vol_{n-1}(F_i)|\langle \theta,v_i\rangle|$, by e.g.~\cite[equation~(13.12)]{San04}. By assumption, $\vol_{n-1}(F_i)=\vol_{n-1}(\partial K)/\up\asymp \vol_{n-1}(\partial K)/n$ for every $i\in \{1,\ldots,2n\}$, so the choice of $v_1,\ldots,v_{2n}$ ensures that the area of  the orthogonal projection of $K$  onto $\theta^\perp$ is at least a positive universal constant multiple of $\vol_{n-1}(\partial K)/\sqrt{n}$. Finally, as $K$ has $O(n)$ facets and unit volume, $\vol_{n-1}(\partial K)\gtrsim n$ by Theorem~\ref{thm:isoperim for poly}. 
}
\end{remark}

\section{proof of Theorem~\ref{thm:vertex reverse is}}\label{sec:proof2}

Given $n\in \N$ and  a convex body $K\subset \R^n$, let $\sigma_{K}$ denote the area measure  of $K$ (see e.g.~\cite[Section~10.1]{Gru07}), which is the Gauss map-pullback to $S^{n-1}$ of the restriction to $\partial K$ of the $(n-1)$-dimensional Hausdorff measure induced by the $\ell_{\!\!2}^n$ metric. Specifically,  for a Borel subset $E$ of $S^{n-1}$, one defines $\sigma_{K}(E)$ to be the $\vol_{n-1}$-measure  of the subset of $\partial K$ that consists of all those $x\in \partial K$ for which there is a unit outer normal to $\partial K$ at $x$ that belongs to $E$.  Thus, $\sigma_K(S^{n-1})=\vol_{n-1}(\partial K)$. We also note for later reference the following straightforward change of variable identity (see~\cite{Pet61} or e.g.~equation~(2.1) in~\cite{GP99}):  
\begin{equation}\label{eq:chage of variable}
\forall \T\in \GL_n(\R),\qquad \vol_{n-1}(\partial \T K)=\int_{S^{n-1}} \|(\T^*)^{-1}u\|_{\!\ell_{\!\!2}^n}\ud \sigma_K(u).  
\end{equation}

By~\cite{Pet61}  the minimum in~\eqref{eq:state reverse iso} exists and the corresponding minimizing matrix is unique up to orthogonal transformations, i.e.,  if $\A,\B\in \SL_n(K)$ are such that $\iq(\A K) =\iq(\B K)=\partial_K$, then necessarily $\A\B^{-1}\in \O_n$. It was proved in~\cite{Pet61} (see also~\cite[Theorem~1]{GP99}) that  $\iq(\A K)=\partial_K$ for some $\A\in \SL_n(\R)$  if and only if the covariance matrix  $(\int_{S^{n-1}} u_iu_j\ud \sigma_{\A K}(u))_{(i,j)\in \n\times \n}\in \M_n(\R)$ of $\sigma_{\A K}$ is a scalar multiple of the identity. That scalar must equal  $\vol_{n-1}(\partial \A K)/n=\iq(\A K)\vol_n(K)^{(n-1)/n}/n$, as seen by comparing traces and using the fact that $\vol_n(\A K)=\vol_n(K)$, since $\A\in \SL_n(\R)$. Hence, the following holds for every $\A\in \SL_n(\R)$:
\begin{equation}\label{isotropic requirement}
 \iq(\A K)=\partial_K\iff \forall \B\in \M_n(\R),\qquad \int_{S^{n-1}} \langle u,\B u\rangle \ud \sigma_{\A K}(u)=\frac{\iq(\A K)\vol_n(K)^{\frac{n-1}{n}}}{n}\mathrm{Trace}(\B). 
\end{equation}

The following lemma  gives an a priori estimate on the Schatten--von Neumann-$1$ norm\footnote{Given $n\in \N$ and $\sC\in \M_n(\R)$, the Schatten--von Neumann-$1$ norm~\cite{vNe37} of $\sC$, denoted $\|\sC\|_{\S_1^n}$, is the trace of $(\sC^*\sC)^{\frac12}$.}   of the surface area  minimizer in~\eqref{eq:state reverse iso} that facilitates a subsequent compactness argument:

\begin{lemma}\label{prop:minimum} For every $n\in \N$ and every convex body $K\subset \R^n$, if $\A\in \SL_n(K)$ satisfies $\iq(\A K) =\partial_K$, then: 
\begin{equation}\label{eq:Schatten 1 appriori}
\|\A\|_{\S_1^n}\lesssim \sqrt{n}\iq(K). 
\end{equation}
\end{lemma}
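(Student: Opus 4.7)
The plan is to exploit the isotropic characterization~\eqref{isotropic requirement} of the minimizer by testing it against the matrix $\A$ itself, then combine this with the change-of-variable identity~\eqref{eq:chage of variable} and the classical isoperimetric lower bound $\partial_K\ge \iq(B^n)\asymp \sqrt{n}$.

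First I would reduce to the case where $\A$ is symmetric and positive definite. Writing the polar decomposition $\A=UP$ with $U\in \O_n$ and $P$ positive semi-definite, the identity $\det \A=1$ together with $\det P>0$ forces $\det U=\det P=1$, so that $P\in \SL_n(\R)$. Since orthogonal transformations preserve the isoperimetric quotient, $\iq(PK)=\iq(U^{-1}\A K)=\iq(\A K)=\partial_K$, whence $P$ is also a minimizer, and $\|\A\|_{\S_1^n}=\|P\|_{\S_1^n}=\trace(P)$. It therefore suffices to bound $\trace(\A)$ under the additional assumption that $\A$ is symmetric positive definite.

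With that reduction in place, the core of the proof is a short computation. Setting $\B=\A$ in~\eqref{isotropic requirement} yields
$$\trace(\A)=\frac{n}{\partial_K\vol_n(K)^{\frac{n-1}{n}}}\int_{S^{n-1}} \langle u,\A u\rangle \ud \sigma_{\A K}(u).$$
Cauchy--Schwarz gives $\langle u,\A u\rangle\le \|\A u\|_{\ell_{\!\!2}^n}$ for every $u\in S^{n-1}$. The decisive observation is that the symmetry of $\A$ makes $((\A^{-1})^{\!*})^{-1}=\A$, so applying~\eqref{eq:chage of variable} with $K$ replaced by $\A K$ and $\T=\A^{-1}$ collapses the resulting integral:
$$\int_{S^{n-1}}\|\A u\|_{\ell_{\!\!2}^n}\ud \sigma_{\A K}(u)=\vol_{n-1}\big(\partial(\A^{-1}\A K)\big)=\vol_{n-1}(\partial K).$$
Combining these estimates gives $\trace(\A)\le n\,\iq(K)/\partial_K$, and the classical isoperimetric inequality~\eqref{eq:quote isoperimetric theorem} provides $\partial_K=\iq(\A K)\gtrsim \sqrt{n}$. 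This yields $\|\A\|_{\S_1^n}\lesssim \sqrt{n}\,\iq(K)$.

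There is no substantive obstacle; the only ingredient requiring any thought is the choice $\B=\A$ in the isotropic condition, whose virtue is that the symmetry of $\A$ makes $((\A^{-1})^{\!*})^{-1}=\A$ and thereby allows the change-of-variable identity~\eqref{eq:chage of variable} to transport the integral back to the surface area of the original body $K$.
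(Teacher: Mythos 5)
Your proof is correct and is essentially the same argument as the paper's, just reorganized. Where you first reduce to symmetric positive definite $\A$ by polar decomposition and then test the isotropicity condition against $\B=\A$, the paper works directly with $(\A^*\A)^{1/2}$, writing $\iq(K)=\iq\bigl((\A^*\A)^{-1/2}\A K\bigr)$ and expanding via the same change-of-variable identity; the two inequalities $\langle u,\A u\rangle\le\|\A u\|$ and $\|(\A^*\A)^{1/2}u\|\ge\langle u,(\A^*\A)^{1/2}u\rangle$ are the same estimate, and both proofs close with $\partial_K\gtrsim\sqrt{n}$.
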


\begin{proof} As $\iq(K) =\iq((\A^*\A)^{-\frac12}\A K)$, since $(\A^*\A)^{-\frac12}\A\in \O_n$,  we have:
\begin{align*}
\iq(K) =\iq\big((\A^*\A)^{-\frac12}&\A K\big)\stackrel{\eqref{eq:chage of variable}}{=}\frac{1}{\vol_{n}(K)^{\frac{n-1}{n}}}\int_{S^{n-1}}\big\|(\A^*\A)^{\frac12} u\big\|_{\!\ell_{\!\!2}^n}\ud\sigma_{\A K}(u)\\ &\ge \frac{1}{\vol_{n}(K)^{\frac{n-1}{n}}}\int_{S^{n-1}} \langle u,(\A^*\A)^{\frac12}u\rangle \ud\sigma_{\A K}(u)\stackrel{\eqref{isotropic requirement}}{=}\frac{\iq(\A K)}{n}\|\A\|_{\S_1^n}\stackrel{\eqref{eq:quote isoperimetric theorem}}{\gtrsim} \frac{1}{\sqrt{n}}\|\A\|_{\S_1^n}.\tag*{\qedhere}
\end{align*}
\end{proof}

The compactness statement that we alluded to above is the following:

\begin{lemma}\label{lem:precomapctness partial K} Fix $n\in \N$. Let $\{K_m\}_{m=1}^\infty$ be a sequence of convex bodies in $\R^n$ satisfying $\sup_{m\in \N} \vol_n(K_m)<\infty$, and furthermore there is $r >0$ such that $K_m\supseteq r B^n$ for every $m\in \N$. Then, there is a subsequence $\{K_{m_i}\}_{i=1}^\infty$ and a convex body $K_\infty\subset \R^n$ with $\lim_{i\to \infty}K_{m_i}=K_\infty$ (in the Hausdorff metric) and $\lim_{i\to \infty} \partial_{K_{m_i}}=\partial_{K_\infty}$. 
\end{lemma}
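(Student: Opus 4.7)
The plan is to prove this in two stages: first extract a Hausdorff-convergent subsequence $\{K_{m_i}\}$ by a standard compactness argument, then use the \emph{a priori} Schatten bound from Lemma~\ref{prop:minimum} to extract a further subsequence along which the optimal matrices from~\eqref{eq:state reverse iso} converge in $\SL_n(\R)$, and finally invoke continuity of $\iq$ on convex bodies to identify the limiting value with $\partial_{K_\infty}$.

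First I would establish a uniform upper bound on the diameters of $\{K_m\}$. If $x\in K_m$ has $\|x\|_{\!\ell_{\!\!2}^n}=R$, then by convexity $K_m$ contains $\conv(rB^n\cup\{x\})$, which in turn contains a cone with apex $x$ and base the $(n-1)$-dimensional ball of radius $r$ in $x^\perp$; this cone has volume $\gtrsim_{n,r} R$. The uniform upper bound on $\vol_n(K_m)$ thus forces $K_m\subseteq \widetilde R B^n$ for some $\widetilde R<\infty$ independent of $m$. The Blaschke selection theorem (see e.g.~\cite[Section~1.8]{Sch14}) yields a subsequence $K_{m_i}\to K_\infty$ in the Hausdorff metric, and since $K_{m_i}\supseteq rB^n$ throughout, the limit $K_\infty$ is itself a convex body containing $rB^n$.

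Next, for each $i$ use Petty's existence statement recalled after~\eqref{eq:chage of variable} to pick $\A_{m_i}\in \SL_n(\R)$ satisfying $\iq(\A_{m_i}K_{m_i})=\partial_{K_{m_i}}$. Lemma~\ref{prop:minimum} gives $\|\A_{m_i}\|_{\S_1^n}\lesssim \sqrt{n}\,\iq(K_{m_i})$; because $K_{m_i}\subseteq \widetilde R B^n$ and $K_{m_i}\supseteq rB^n$, standard monotonicity of surface area of convex bodies bounds $\vol_{n-1}(\partial K_{m_i})$ from above and $\vol_n(K_{m_i})$ from below, making $\iq(K_{m_i})$ uniformly bounded. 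Hence $\|\A_{m_i}\|_{\mathrm{op}}$ is uniformly bounded, and since $\det(\A_{m_i})=1$ the product of singular values is $1$, forcing $\|\A_{m_i}^{-1}\|_{\mathrm{op}}$ to be uniformly bounded as well. Passing to a further subsequence (not relabeled), $\A_{m_i}\to \A_\infty$ with $\A_\infty\in\SL_n(\R)$.

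Finally, joint continuity $(\A,K)\mapsto \A K$ in Hausdorff distance shows $\A_{m_i}K_{m_i}\to \A_\infty K_\infty$, and continuity of $\vol_n$ and $\vol_{n-1}(\partial\,\cdot\,)$ on convex bodies under Hausdorff convergence (classical; see~\cite[Chapter~1]{Sch14}) yields $\iq(\A_{m_i}K_{m_i})\to \iq(\A_\infty K_\infty)$, so $\lim_i \partial_{K_{m_i}}=\iq(\A_\infty K_\infty)\ge \partial_{K_\infty}$. For the matching upper bound, fix any $\B\in \SL_n(\R)$ with $\iq(\B K_\infty)=\partial_{K_\infty}$; the same continuity gives $\partial_{K_{m_i}}\le \iq(\B K_{m_i})\to \iq(\B K_\infty)=\partial_{K_\infty}$, sandwiching the limit at $\partial_{K_\infty}$. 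The main obstacle is the third step: without the Schatten bound of Lemma~\ref{prop:minimum} the optimizers $\A_{m_i}$ could degenerate, and the limiting body would only inherit an inequality rather than the genuine value $\partial_{K_\infty}$; the key point is that the Petty-style variational identity~\eqref{isotropic requirement} combined with the isoperimetric lower bound~\eqref{eq:quote isoperimetric theorem} ensures that optimal $\SL_n(\R)$-images stay in a compact set of matrices.
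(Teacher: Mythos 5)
Your proof is correct and follows essentially the same route as the paper's: a cone argument for the uniform diameter bound, Blaschke selection, the Schatten bound from Lemma~\ref{prop:minimum} to confine the Petty minimizers $\A_{m_i}$ to a compact subset of $\SL_n(\R)$, and continuity of $\iq$ under Hausdorff convergence. The only cosmetic difference is the final step: you conclude via a two-sided sandwich ($\ge$ from the limit of optimizers, $\le$ by testing the minimizer $\B$ for $K_\infty$ against each $K_{m_i}$), whereas the paper shows directly that $\A_\infty$ is optimal for $K_\infty$ by comparing $\iq(\B\A_\infty K_\infty)$ with $\iq(\A_\infty K_\infty)$ for arbitrary $\B\in\SL_n(\R)$; these are logically equivalent.
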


\begin{proof} Write $V=\sup_{m\in \N} \vol_n(K_m)$. Then, $K_1,K_2,\ldots \subset R B^n$ for  $R=nV/(r^{n-1}\vol_{n-1}(B^{n-1}))$ since if $m\in \N$ and there were $x\in K_m$ with $\|x\|_{\!\ell_{\!\!2}^n}>R$, then as $K_m\supseteq R B^n\supseteq x^\perp\cap R B^n$, by convexity $K_m$ would contain the cone $\conv (\{x\}\cup (x^\perp\cap RB^{n}))$ whose base is $x^\perp\cap rB^n$ and whose height is $\|x\|_{\!\ell_{\!\!2}^n}$, yielding the contradiction $V\ge \vol_n(K_m)\ge \vol_n(\conv (\{x\}\cup (x^\perp\cap rB^n)))=\|x\|_{\!\ell_{\!\!2}^n}\vol_{n-1}(rB^{n-1})/n>Rr^{n-1}\vol_{n-1}(B^{n-1})/n=V$.  Thus,  all of $\{K_m\}_{m=1}^\infty$ are contained in a compact set, so there exists a subsequence $\{K_{m(k)}\}_{k=1}^\infty$ and a convex set $K_\infty\subset \R^n$ such that $\lim_{k\to \infty}K_{m(k)}=K_\infty$ in the Hausdorff metric. As $K_m\supseteq rB^n$ for every $m\in \N$, also $K_\infty\supseteq rB^n$, so $K_\infty$ is a convex body. The Hausdorff convergence thus implies   $\lim_{k\to \infty} \iq(K_{m(k)})=\iq(K_\infty)$.

For each $m\in \N$ fix a matrix $\A_m\in \SL_n(\R)$ such that $\iq(\A_m)=\partial_{K_m}$.  By (the second part of) Lemma~\ref{lem:cones} we have $\sup_{m\in \N} \iq(K_m)\le nV^{1/n}/r$. Thanks to Lemma~\ref{prop:minimum} we therefore have $\sup_{m\in \N} \|\A_m\|_{\S_1^n}\lesssim n^{3/2}V^{1/n}/r$. So, even though $\SL_n(\R)$ is not compact, all of the matrices $\{\A_m\}_{m=1}^\infty$ belong to a compact subset, whence there exists a subsequence $\{m_i\}_{i=1}^\infty$ of $\{m(k)\}_{k=1}^\infty$ such that $\lim_{i\to \infty} \A_{m_i}=\A_\infty$ for some  $\A_\infty\in \SL_n(\R)$.  Every $\B\in \SL_n(\R)$ satisfies $\iq(\B\A_\infty K_\infty)=\lim_{i\to \infty} \iq(\B\A_{m_i}K_{m_i})\ge \lim_{i\to \infty}  \iq (\A_{m_i}K_{m_i})=\iq(A_\infty K_\infty)$, where the inequality holds by the choice of $\A_{m_i}$. Hence, $\partial_{ K_\infty}=\iq(\A_\infty K_\infty)=\lim_{i\to \infty}  \iq (\A_{m_i}K_{m_i})=\lim_{i\to \infty}  \partial_{K_{m_i}}$. 
\end{proof}

The regular simplex $\triangle_n$ has $n+1$ vertices and $n+1$ facets, and it satisfies $\partial_{\triangle_n}=\iq(\triangle_n)\asymp n$, using the aforementioned isotropicity criterion~\eqref{isotropic requirement} of~\cite{Pet61} and~\eqref{eq:iq simplex and cross polytope}.  By combining Remark~\ref{rem:approximate with more} and Lemma~\ref{lem:precomapctness partial K}, it follows that for every $\up,\ub\ge n+1$ there exist convex polytopes $K_\ub,K_\up\subset \R^n$ such that $K_\up$ has (exactly) $\up$ facets, $K_\ub$ has $\ub$ vertices, and $\partial_{K_\up}\asymp n\asymp\partial_{K_\ub}$. In the same vein, the hypercube $[-1,1]^n$ is origin symmetric, has $2n$ facets, and satisfies $\partial_{[-1,1]^n}=\iq([-1,1]^n)=2n$, so by combining Remark~\ref{rem:approximate with more} and Lemma~\ref{lem:precomapctness partial K} it follows that for every $\up\ge 2n$ there exists convex polytope $K\subset \R^n$ that has $\up$ facets and satisfies $\partial_{K}\asymp n$. This completes the justification of the statements that were made in the Introduction to explain why Theorem~\ref{thm:vertex reverse is} treats only the case of origin-symmetric polytopes that have a fixed number of vertices. 

The following lemma presents  volume and surface area computations that will be used later to justify why the estimate~\eqref{eq:reverse isoperim in thm} of Theorem~\ref{eq:reverse isoperim in thm} is sharp:
\begin{lemma}\label{lem:l1 sum 1} Fix $a, b_1,\ldots,b_a,\ub_1,\ldots,\ub_a,\up_1,\ldots,\up_a\in \N$. For each $i\in \{1,\ldots,a\}$, let $K_i\subset \R^{b_i}$ be a convex polytope that has $\ub_i$ vertices and $\up_i$ facets, such that $K_i\supseteq h_i B^{b_i}$ for some $h_i>0$ and every facet of $K_i$ 
has nonempty intersection with $h_iB^{b_i}$ (i.e., the assumption of Lemma~\ref{lem:cones}  holds for $K_1,\ldots,K_a$, in their respective dimensions). Define $K=K(K_1,\ldots,K_a)\subset \R^{b_1}\times \ldots\times \R^{b_a}\cong \R^{b_1+\ldots+b_a}$ as follows: 
\begin{equation}\label{eq:def ell_1 sum}
K\eqdef \Big\{(\lambda_1 x_1,\ldots,\lambda_a x_a):\ (x_1,\ldots,x_a)\in K_1\times \ldots\times K_a\quad\mathrm{and}\quad  \ \lambda_1\ldots,\lambda_a\in [0,1]^a\ \quad\mathrm{and}  \quad \sum_{i=1}^a\lambda_i=1\Big\}.
\end{equation} 
Then, $K$ is a convex polytope that has $\ub_1+\ldots+\ub_a$ vertices and $\up_1\cdots\up_a$ facets whose volume is given by:
\begin{equation}\label{volume formula l1 sum}
\vol_{b_1+\ldots+b_a}(K)= \frac{1}{(b_1+\ldots+b_a)!}\prod_{i=1}^a b_i!\vol_{b_i}(K_i), 
\end{equation}
and whose surface area is given by:
\begin{equation}\label{eq:surface area forluma l1 sum}
\vol_{b_1+\ldots+b_a-1}(\partial K)= \frac{\sqrt{\frac{1}{h_1^2}+\ldots+\frac{1}{h_a^2}}}{(b_1+\ldots+b_a-1)!}\prod_{i=1}^a b_i!\vol_{b_i}(K_i). 
\end{equation}

Suppose furthermore that $K_1,\ldots, K_a$ are origin-symmetric and that for every $i\in \{1,\ldots,a\}$ the facets of $K_i$ are congruent to each other, i.e., if $F,F'$ are facets of $K_i$ then there exists an isometry $\mathsf{J}=\mathsf{J}_{F,F'}:\R^{b_i}\to \R^{b_i}$ such that $\mathsf{J}(F)=F'$. If also for every $i\in \{1,\ldots,a\}$ we have  $\partial_{K_i}=\iq(K_i)$ and $h_i^2=1/\sqrt{b_i}$, then:
\begin{equation}\label{minimum surface area formula}
\partial_K=\iq(K)=\Bigg(\frac{\prod_{i=1}^a b_i^{-\frac32 b_i}b_i!}{(b_1+\ldots+b_a)!}\Bigg)^{\frac{1}{b_1+\ldots+b_a}}(b_1+\ldots+b_a)^\frac32 \prod_{i=1}^a \partial_{K_i}^{\frac{b_i}{b_1+\ldots+b_a}}\asymp \sqrt{b_1+\ldots+b_a}\prod_{i=1}^a \bigg(\frac{1}{\sqrt{b_i}}\partial_{K_i}\bigg)^{\frac{b_i}{b_1+\ldots+b_a}}. 
\end{equation}
\end{lemma}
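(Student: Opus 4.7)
The plan is to use the $\ell_1$-sum description $K = \{(y_1,\ldots,y_a)\in\R^{b_1}\oplus\cdots\oplus\R^{b_a}: \sum_{i=1}^a\|y_i\|_{K_i}\le 1\}$, where $\|\cdot\|_{K_i}$ is the Minkowski gauge of $K_i$ (this follows from~\eqref{eq:def ell_1 sum} by taking $\lambda_i = \|y_i\|_{K_i}$); this perspective makes every subsequent computation transparent. The extreme points of $K$ are precisely the embeddings of vertices of each $K_i$ into the corresponding coordinate subspace, because if $(0,\ldots,v,\ldots,0)$, with $v$ a vertex of $K_{i_0}$, equals $\tfrac12(p+q)$ for $p,q\in K$, then subadditivity of the gauges yields $\sum_i(\|p_i\|_{K_i}+\|q_i\|_{K_i})\ge 2\|v\|_{K_{i_0}}=2$ with equality throughout, collapsing $p_i=q_i=0$ for $i\ne i_0$ and reducing to the extremality of $v$ in $K_{i_0}$; this yields $\sum_i\ub_i$ vertices. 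Dually, for every tuple $(F_1,\ldots,F_a)$ of facets of the $K_i$'s with outer unit normals $u_i$, the vector $w:=(u_1/h_1,\ldots,u_a/h_a)$ satisfies $\langle\cdot,w\rangle\le 1$ on $K$ with equality attained precisely on $\{(\lambda_1 x_1,\ldots,\lambda_a x_a):\lambda\ge 0,\,\sum_i\lambda_i=1,\,x_i\in F_i\}$; the latter set has dimension $(a-1)+\sum_i(b_i-1)=n-1$ and is therefore a facet of $K$, the $\prod_i\up_i$ such tuples exhaust the facets of $K$, and $\|w\|_{\ell_2^n}=\sqrt{\sum_i 1/h_i^2}$ places each facet at distance $h:=1/\sqrt{\sum_i 1/h_i^2}$ from the origin. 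Hence $K\supseteq hB^n$ and Lemma~\ref{lem:cones} applies to $K$.

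For the volume~\eqref{volume formula l1 sum}, I would use polar coordinates inside each factor: write $y_i = r_iz_i$ with $r_i\ge 0$ and $z_i\in\partial K_i$ under the cone measure $\mu_{K_i}$ of total mass $b_i\vol_{b_i}(K_i)$. The constraint $\sum_i r_i\le 1$ reduces the volume to the product of those cone-measure masses with the classical Dirichlet integral $\int_{\{r\ge 0,\,\sum r_i\le 1\}}\prod_i r_i^{b_i-1}\ud r = \prod_i(b_i-1)!\,/\,(b_1+\cdots+b_a)!$, producing~\eqref{volume formula l1 sum}. The surface-area formula~\eqref{eq:surface area forluma l1 sum} is then immediate from the identity $\vol_{n-1}(\partial K)=(n/h)\vol_n(K)$ supplied by Lemma~\ref{lem:cones}.

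For the claim $\partial_K=\iq(K)$ under the additional symmetry hypotheses, I would verify the isotropicity criterion~\eqref{isotropic requirement} of~\cite{Pet61} applied to $K$: the symmetry forces all facets of $K$ to have the common area $V_F = \vol_{n-1}(\partial K)/\prod_i\up_i$, so the covariance $\int_{S^{n-1}}uu^*\ud\sigma_K(u)$ equals $V_F\sum_{F_1,\ldots,F_a}\hat w\hat w^*$ with $\hat w := (u_1/h_1,\ldots,u_a/h_a)/\|w\|_{\ell_2^n}$. In the block decomposition $\R^n=\R^{b_1}\oplus\cdots\oplus\R^{b_a}$, the off-diagonal blocks vanish because origin-symmetry of $K_j$ gives $\sum_{F_j}u_{F_j}=0$, while each diagonal $(i,i)$-block is a scalar multiple of $I_{b_i}$ by~\eqref{isotropic requirement} applied within $K_i$ (using $\partial_{K_i}=\iq(K_i)$ together with the equal facet areas of $K_i$), the scalar being $(V_F\prod_k\up_k)/(b_ih_i^2\|w\|_{\ell_2^n}^2)$; the stated scaling of $h_i$ is precisely what makes $b_ih_i^2$ independent of $i$, so all diagonal blocks agree and the covariance is a scalar multiple of $I_n$. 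Thus $\partial_K=\iq(K)$. The exact identity of~\eqref{minimum surface area formula} then follows by substituting $\vol_{b_i}(K_i)=(h_i\partial_{K_i}/b_i)^{b_i}$ (from Lemma~\ref{lem:cones} applied to each $K_i$) into~\eqref{volume formula l1 sum} and using $\iq(K)=(n/h)\vol_n(K)^{1/n}$, and the asymptotic $\asymp$ is a routine Stirling estimate on $n!/\prod_ib_i!$ and $\prod_ib_i^{b_i}$. I expect the isotropicity verification to be the main technical step; everything else is bookkeeping once the $\ell_1$-sum perspective is in place.
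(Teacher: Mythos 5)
Your proof is correct and takes essentially the same route as the paper's: identify the vertices and facets of the $\ell_1$-sum, compute the volume, invoke Lemma~\ref{lem:cones} for the surface area, and verify the Petty isotropicity criterion~\eqref{isotropic requirement} block-by-block (off-diagonal blocks vanish by origin-symmetry, diagonal blocks are scalar by the isotropicity of each $\sigma_{K_i}$). Two cosmetic differences worth noting: you compute the volume in one shot via a Dirichlet integral over the simplex $\{r\ge0,\ \sum r_i\le 1\}$, whereas the paper does the same calculation by induction on $a$, integrating out one factor at a time using the cone-measure polar-coordinate formula~\eqref{eq:beta function identity}; and your $\ell_1$-gauge description $\sum_i\|y_i\|_{K_i}\le1$ makes the facet identification slightly more transparent than the paper's convex-hull-of-embeddings phrasing. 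One thing you glossed over: as literally written, the hypothesis $h_i^2=1/\sqrt{b_i}$ gives $b_ih_i^2=\sqrt{b_i}$, which is \emph{not} independent of $i$; your claim that ``the stated scaling of $h_i$ is precisely what makes $b_ih_i^2$ independent of $i$'' requires $h_i=1/\sqrt{b_i}$ (equivalently $h_i^2=1/b_i$), which is what the paper's own proof uses and what the application to rescaled cubes produces --- so the lemma statement has a typo, and you silently worked with the corrected condition. Flagging this explicitly would have tightened the write-up, but it does not affect the validity of the argument.
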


Prior to proving Lemma~\ref{lem:l1 sum 1}, we will use it to justify the second part of Theorem~\ref{eq:reverse isoperim in thm}, namely, the optimality   of~\eqref{eq:reverse isoperim in thm}. Fix $n\in \N$ and $\ub\in 2\N$ satisfying $\ub\ge 2n$. If $\ub\ge 2^n$, then as $[-1,1]^n$ has $2^n$ vertices, by combining Remark~\ref{rem:approximate with more} and Lemma~\ref{lem:precomapctness partial K}  we see that there is an origin-symmetric convex polytope $K\subset \R^n$ that has $\ub$ vertices and $\partial_K\gtrsim \partial_{[-1,1]^n}=2n$, so indeed~\eqref{eq:reverse isoperim in thm} is sharp in this case. We may therefore assume from now that $2n\le \ub<2^n$. Let $m$ be the largest element of $\{2,3,\ldots\}$ such that $2^m/m\le \ub/n$.  Then $m\asymp \log(\ub/n)$. Also, $m\ge n+1$ because otherwise $2^m/m\le \ub/n<2^n/n$ in contradiction to our assumption on $\ub$. We can therefore divide with remainder to write $n=(a-1)m+r$ for some integer $a\ge 2$ and some  $r\in \{1,\ldots,m\}$. 

Apply Lemma~\ref{lem:l1 sum 1} to $K_1=\ldots=K_{a-1}=(1/\sqrt{m})[-1,1]^m$ and $K_{a}=(1/\sqrt{r})[-1,1]^n$. Thus, in the notations of Lemma~\ref{lem:l1 sum 1}, we have  $b_1=\ldots=b_{a-1}=m$ and $b_a=r$, and also $h_1=\ldots=h_{a-1}=1/\sqrt{m}$ and $h_a=1/\sqrt{r}$. As all of the assumptions of Lemma~\ref{lem:l1 sum 1} that ensure that~\eqref{minimum surface area formula} holds are satisfied, we obtain a convex polytope $K$  in $\R^n$ that has $(a-1)2^m+2^r$ vertices for which we have the following estimate:
$$
\partial_K\asymp \sqrt{n}\big(2\sqrt{m}\big)^{\frac{(a-1)m}{n}}\big(2\sqrt{r}\big)^{\frac{r}{n}}=2\sqrt{nm}\left(\frac{r}{m}\right)^{\frac{r}{2n}}\ge 2\sqrt{nm}\left(\frac{r}{m}\right)^{\frac{r}{2m}}\asymp \sqrt{nm}\asymp {\textstyle \sqrt{n\log\frac{\ub}{n}}}.
$$
Observe that $(a-1)2^m+2^r=(n-r)2^m/m+2^r=n2^m/m-r(2^m/m-2^r/r)\le n2^m/m\le \ub$, where the penultimate inequality holds as $r\in \{1,\ldots,m\}$ and final inequality holds by the definition of $m$. So, the number of vertices of $K$ is at most $\ub$.  By  combining Remark~\ref{rem:approximate with more} and Lemma~\ref{lem:precomapctness partial K} we conclude that there exists an origin-symemtric convex polytope $K'\subset \R^n$ that has (exactly) $\ub$ vertices and satisfies $\partial_{K'}\asymp \sqrt{n\log (\ub/n)}$.

\begin{proof}[Proof of Lemma~\ref{lem:l1 sum 1}] For every $i\in \{1,\ldots,a\}$ let $v_{i1},\ldots,v_{i,\ub_i}$ be the vertices of $K_i$ and let $F_{i,1},\ldots,F_{i,\up_i}$ be the facets of $K_i$. Denote the canonical copy of $K_i$ in $\R^{b_1}\times \ldots\times \R^{b_a}$ by $K_i'$, i.e., $K_i'$ consists  of those $(x_1,\ldots,x_a)$ for which $x_i\in K_i$  and $x_j=0$ for every $j\in \{1,\ldots,a\}\setminus\{i\}$. The body $K$ in~\eqref{eq:def ell_1 sum} is the convex hull of $K_1'\cup\ldots\cup K_a'$. 

Thus, $K$   is a convex polytope whose vertices are the $\ub_1+\ldots+\ub_a$ vectors $(x_1,\ldots,x_a)\in \R^{b_1}\times \ldots\times \R^{b_a}$  for which there is $i\in \{1,\ldots,a\}$ such that $x_i=v_{ij}$ for some $j\in \{1,\ldots,\ub_i\}$, and  $x_k=0$ for every $k\in \{1,\ldots,a\}\setminus\{i\}$. The facets of $K$ are the $\up_1\cdots\up_a$ sets  $\{F_{j_1\ldots j_a}: (j_1,\ldots,j_a)\in \{1,\ldots,\up_1\}\times \ldots\times \{1,\ldots,\up_a\}\}$, where for every  $(j_1,\ldots,j_a)\in \{1,\ldots,\up_1\}\times \ldots\times \{1,\ldots,\up_a\}$ we introduce the following notation: 
\begin{equation}\label{eq:facet formula}
F_{j_1\ldots j_a}\!\!\eqdef \! \Big\{(\lambda_1 x_1,\ldots,\lambda_a x_a):\ (x_1,\ldots,x_a)\in F_{1j_1}\times \ldots\times F_{a j_a}\quad\mathrm{and}\quad  \ \lambda_1\ldots,\lambda_a\in [0,1]^a\ \quad\mathrm{and}  \quad \sum_{i=1}^a\lambda_i=1\Big\}. 
\end{equation}

Expression~\eqref{volume formula l1 sum} for the volume of $K$ is a direct consequence of its definition via the following simple induction. If $a=1$, then $K=[0,1]K_1=K_1$ as $0\in K_1$ and $K_1$ is convex, so both sides of~\eqref{volume formula l1 sum} equal $\vol_{b_1}(K_1)$.   If $a>1$, then let $L\subset \R^{b_1}\times \ldots\times \R^{b_{a-1}}$ be the body obtained by the above procedure for $K_1,\ldots, K_{a-1}$, namely:
$$
L= \Big\{(\lambda_1 x_1,\ldots,\lambda_{a-1} x_{a-1}):\ (x_1,\ldots,x_{a-1})\in K_1\times \ldots\times K_{a-1}\quad\mathrm{and}\quad  \ \lambda_1\ldots,\lambda_{a-1}\in [0,1]^{a-1}\ \quad\mathrm{and}  \quad \sum_{i=1}^{a-1}\lambda_i=1\Big\}.
$$
The orthogonal projection of $K$ onto $\R^{b_a}$ equals $[0,1]K_a=K_a$, as $0\in K_a$ and $K_a$ is convex. Denote the Minkowski functional of $K_a$ by $\|\cdot\|_{K_a}:\R^{b_a}\to [0,\infty)$, i.e.,  $\|y\|_{K_a}=\inf\{s>0:\ (1/s)y\in K_a\}$ for  $y\in\R^{b_a}$. Then:
$$
\forall y\in K_a,\qquad \Big\{(\lambda_1 x_1,\ldots,\lambda_{a-1} x_{a-1})\in \R^{b_1}\times \ldots\times \R^{b_{a-1}}:\ (\lambda_1 x_1,\ldots,\lambda_{a-1} x_{a-1},y)\in K\Big\}
=\big(1-\|y\|_{K_a}\big)L. 
$$
By Fubini we therefore see that:
\begin{equation}\label{eq:beta function identity}
\frac{\vol_{b_1+\ldots+b_a}(K)}{\vol_{b_1+\ldots+b_{a-1}}(L)}=\int_{K_a} \big(1-\|y\|_{K_a}\big)^{b_1+\ldots+b_{a-1}}\ud y= \frac{b_a!(b_1+\ldots+b_{a-1})!}{(b_1+\ldots+b_{a})!}\vol_{b_a}(K_a),
\end{equation}
where the last step of~\eqref{eq:beta function identity} is standard (e.g., by substituting the function $(y\in \R^{b_a})\mapsto (1-\|y\|_{K_a})^{b_1+\ldots+b_{a-1}}$ into~\cite[Proposition~1]{NR03}, which is integration in polar coordinates with respect to the cone measure~\cite{GM87} of $K_a$). This establishes the inductive step for~\eqref{volume formula l1 sum}, we will next proceed to justify~\eqref{eq:surface area forluma l1 sum}.   

Fix $x=(x_1,\ldots,x_a)\in \R^{b_1}\times \ldots\times \R^{b_a}$ such that $x_i\neq 0$ for all $i\in \{1,\ldots,a\}$. Let $d_i$ denote the $\ell_{\!\!2}^{b_i}$-norm of $x_i$. If the Euclidean norm $d=(d_1^2+\ldots+d_a^2)^{1/2}$ of $x$ satisfies $d\le h$, where $h=1/(1/h_1^2+\ldots+1/h_a^2)^{1/2}$, then setting $\lambda_i=d_i/h_i$ for  $i\in \{1,\ldots,a\}$, we have $\lambda_1+\ldots+\lambda_a\le 1$ by Cauchy--Schwartz. Write $y_i=(1/\lambda_i)x_i$ for each $i\in \{1,\ldots,a\}$. Then, $y_i\in h_iB^{b_i}\subset K_i$, so $x=(\lambda_1y_1,\ldots,\lambda_ay_a)\in K$ by~\eqref{eq:def ell_1 sum} as $0\in K$. Hence, $K$ contains the  Euclidean ball in $\R^{b_1}\times \ldots\times \R^{b_a}$ of radius $h$. Every facet of $K$ intersect that ball. Indeed, by assumption for every $i\in \{1,\ldots,a\}$ and every $j\in \{1,\ldots,\up_i\}$ there is $u_{ij}\in S^{b_i-1}$ such that $x_{ij}=h_iu_{ij}\in F_{ij}$. Recalling~\eqref{eq:facet formula}, for every $(j_1,\ldots,j_a)\in \{1,\ldots,\up_1\}\times \ldots\times \{1,\ldots,\up_a\}$  the vector $h^{2}(h_1^{-1}u_{1j_1},\ldots,h_a^{-1}u_{aj_a})=h^{2}(h_1^{-2}x_{1j_1},\ldots,h_a^{-2}x_{aj_a})$ belongs to $F_{j_1\ldots j_a}$ and its Euclidean length equals $h$. We thus checked that the assumptions of Lemma~\ref{lem:cones} hold for $K$, whence~\eqref{eq:surface area forluma l1 sum} follows from~\eqref{volume formula l1 sum} through  an application of of Lemma~\ref{lem:cones}.

If we assume in addition that for every $i\in \{1,\ldots,a\}$ the facets of $K_i$ are congruent to each other, then $\vol_{b_i-1}(F_{ij})=\vol_{b_i-1}(\partial K_i)/\up_i$ for every $i\in \{1,\ldots,a\}$ and $j\in \{1,\ldots,\up_i\}$. Furthermore, the facets of $K$ that are given in~\eqref{eq:facet formula} are now congruent to each other,  so $\vol_{b_1+\ldots+b_a-1}(F_{j_1\ldots j_a})=\vol_{b_1+\ldots+b_a-1}(\partial K)/(\up_1\cdots\up_a)$ for every $(j_1,\ldots,j_a)\in \{1,\ldots,\up_1\}\times \ldots\times \{1,\ldots,\up_a\}$. For every $i\in \{1,\ldots,a\}$ and $j\in \{1,\ldots,\up_i\}$ the unit outer normal to $F_{ij}$ is $u_{ij}=(u_{ij1},\ldots,u_{ijb_i})\in S^{b_i-1}$, so it follows that for every $i\in \{1,\ldots,a\}$ the area measure $\sigma_{K_i}$ is equal to $\vol_{b_i-1}(\partial K_i)/\up_i$ times the sum over $j\in \{1,\ldots,\up_i\}$ of the point mass at $u_{ij}$.  The unit outer normal to $F_{j_1\ldots j_a}$ is $h(h_1^{-1}u_{1j_1},\ldots,h_a^{-1}u_{aj_a})\in \R^{b_1}\times\ldots  \times \R^{b_a}$ for every $(j_1,\ldots,j_a)\in \{1,\ldots,\up_1\}\times \ldots\times \{1,\ldots,\up_a\}$, so it similarly follows that the area measure $\sigma_{K}$ is equal to $\vol_{b_1+\ldots+b_a-1}(\partial K)/(\up_1\cdots\up_a)$ times the sum over $(j_1,\ldots,j_a)\in \{1,\ldots,\up_1\}\times \ldots\times \{1,\ldots,\up_a\}$ of the point mass at  $h(h_1^{-1}u_{1j_1},\ldots,h_a^{-1}u_{aj_a})$.

Therefore, if we assume in addition that $\iq(K_i)=\partial_{K_i}$ for every $i\in \{1,\ldots,a\}$, then by the  criterion of~\cite{Pet61} the covariance matrix of $\sigma_{K_i}$ must be equal to $\vol_{b_i-1}(\partial K_i)/b_i$ times the identity matrix, whence: 
\begin{equation}\label{eq:bi isotropic}
\forall i\in \{1,\ldots,a\},\ \forall r,s\in \{1,\ldots, b_i\},\qquad \sum_{j=1}^{\up_i} u_{ijr}u_{ijs}=\frac{\up_i}{b_i}\d_{rs}.
\end{equation}
Next, take distinct $i,i'\in \{1,\ldots,a\}$. For every $r\in \{1,\ldots,b_i\}$ and $r'\in \{1,\ldots,b_{i'}\}$, by the above description of $\sigma_K$,  the $(r,r')$-entry of its covariance matrix equals the following quantity: 
\begin{align}\label{eq:cov K}
\frac{\vol_{b_1+\ldots+b_a-1}(\partial K)h^2}{\up_1\cdots\up_ah_ih_{i'}}\sum_{(j_1,\ldots,j_a)\in \{1,\ldots,\up_1\}\times \ldots\times \{1,\ldots,\up_a\}}  u_{ij_ir}u_{i'j_{i'}r'} =\frac{\vol_{b_1+\ldots+b_a-1}(\partial K)h^2}{\up_i\up_{i'}h_ih_{i'}}\sum_{j=1}^{\up_i}\sum_{j'=1}^{\up_{i'}} u_{ijr}u_{i'j'r'}. 
\end{align}
If we further assume that the bodies $K_1,\ldots,K_a$ are origin-symmetric, then $-u_{ij}\in \{u_{i1},\ldots,u_{i\up_i}\}$ for every  $j\in \{1,\ldots,\up_i\}$, so the right hand side of~\eqref{eq:cov K} vanishes.   At the same time, for   $i\in \{1,\ldots,a\}$ and $r,s\in \{1,\ldots,b_i\}$ the $(r,s)$-entry of  the covariance matrix of $\sigma_K$ equals the following quantity: 
\begin{multline*}
\frac{\vol_{b_1+\ldots+b_a-1}(\partial K)h^2}{\up_1\cdots\up_ah_i^2}\sum_{(j_1,\ldots,j_a)\in \{1,\ldots,\up_1\}\times \ldots\times \{1,\ldots,\up_a\}}  u_{ij_ir}u_{ij_{i}s}\\=\frac{\vol_{b_1+\ldots+b_a-1}(\partial K)h^2}{\up_ih_i^2}\sum_{j=1}^{\up_i} u_{ijr}u_{ijs}\stackrel{\eqref{eq:bi isotropic}}{=}h^2\vol_{b_1+\ldots+b_a-1}(\partial K)\frac{1}{b_ih_i^2}\d_{rs}.
\end{multline*}
Consequently, if in addition to the above assumptions  $h_i=1/\sqrt{b_i}$ for every $i\in \{1,\ldots,a\}$, then the covariance matrix of $\sigma_K$ is a multiple of the identity matrix on $\R^{b_1}\times\ldots\times \R^{b_a}$, so by~\cite{Pet61} we have $\iq(K)=\partial_K$, i.e., the first equality in~\eqref{minimum surface area formula} holds. The second equality follows by substituting~\eqref{volume formula l1 sum} and~\eqref{eq:surface area forluma l1 sum}  into the definition of $\iq(K)$ while using the assumption that  $h_i=1/\sqrt{b_i}$ for every $i\in \{1,\ldots,a\}$. 
The final asymptotic equivalence in~\eqref{minimum surface area formula} is a straightforward consequence of Stirling's formula. 
\end{proof}

To complete the proof of Theorem~\ref{thm:vertex reverse is}, it remains to show that~\eqref{eq:reverse isoperim in thm} holds for every convex polytope $K\subset \R^n$ that has $\ub$ vertices and some $\A\in \SL_n(\R)$, i.e., our goal is to  demonstrate that $\partial_K\lesssim \max\{\sqrt{n\log(\ub/n)},n\}$.

\begin{proof}[Proof of~\eqref{eq:reverse isoperim in thm}] The following volumetric estimate was proved in~\cite{BF88}  for every $n,\ub\in \N$:
\begin{equation}\label{eq:quote BF}
\forall v_1,\ldots,v_\ub\in B^n,\qquad \vol_n \big(\conv(\{v_1,\ldots,v_\ub\})\big)^{\frac{1}{n}}\lesssim \frac{1}{n}{\textstyle \sqrt{1+\log \frac{\ub}{n}}}.
\end{equation}
By John's theorem~\cite{Joh48} every origin-symmetric convex body $K\subset \R^n$ admits $\A\in \SL_n(\R)$ and $r>0$ such that  $rB^n\subset \A K\subset \sqrt{n} r B^n$.   If $K$ is furthermore a convex polytope with $\ub$ vertices for  $\ub\ge 2n$, then by~\eqref{eq:quote BF} we have  $\vol_n(\A K)^{1/n}\lesssim r\sqrt{\log (\ub/n)}/\sqrt{n}$.  At the same time, $\iq(\A K)\le n\vol_n(\A K)^{1/n}/r$ by the second part~\eqref{eq:iq for including} of Lemma~\ref{lem:cones}. When $\ub< 2^n$, the desired estimate~\eqref{eq:reverse isoperim in thm} follows by concatenating these two bounds. For $\up\ge 2^n$, the desired estimate~\eqref{eq:reverse isoperim in thm} is a special case of the reverse isoperimetric theorem~\cite{Bal91c}.
\end{proof}

\section{proof of Theorem~\ref{thm:weak reverse iso}}\label{sec:proof3}

Given $n\in \N$ and a convex body $K\subset \R^n$,  let $\lambda(K)$ be the smallest $\lambda>0$ such that there is $\f:\Omega\to \R$ that vanishes on $\partial K$ and  is smooth and  satisfies $-\Delta \f= \lambda \f$ on the interior of $K$, where $\Delta=\sum_{i=1}^n\partial^2/(\partial x_i^2)$ is the standard Laplacian on $\R^n$. The (classical and rudimentary) background  on spectral properties of the Dirichlet Laplacian  that is relevant to the ensuing reasoning can be found in~\cite{PS51,Cha84}; in particular, it has discrete pure point spectrum on the compact domain $K$, so the definition of $\lambda(K)$ makes sense, and:
\begin{equation}\label{eq:hpomogeneity of lambda}
\forall s\in \R\setminus \{0\},\qquad \lambda(sK)=\frac{\lambda(K)}{s^2}.
\end{equation}

The main result of this section is:

\begin{theorem}\label{thm:spectral} Fix $n,\up\in \N$ with $\up\ge n+1$. Suppose that $K\subset \R^n$ is a convex polytope   that has $\up$ facets. Then, there exists a positive definite invertible matrix $\B\in \GL_n(\R)$ such that:
\begin{equation}\label{eq:spectal two parts}
\vol_n(\B K)^{\frac{1}{n}}\lesssim \sqrt{\frac{\up}{n}}\qquad\mathrm{and}\qquad \lambda(\B K)\lesssim \up. 
\end{equation}
\end{theorem}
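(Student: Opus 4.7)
The plan is to use the spectral reformulation of the weak isomorphic reverse isoperimetry conjecture given in \cite[Section~1.6.1]{Nao24}: given a positive definite $\B$ satisfying the two bounds of Theorem~\ref{thm:spectral}, one extracts the inscribed symmetric convex body $L$ demanded by Theorem~\ref{thm:weak reverse iso} as a sub-level set of the first Dirichlet eigenfunction of $\B K$ (convex, since the ground state is log-concave by Brascamp--Lieb); a direct computation then yields $\vol_n(L)^{1/n}\gtrsim (n/\up)\vol_n(K)^{1/n}$ and $\iq(L)\lesssim \up/\sqrt n$, which is exactly \eqref{eq:desired conditions for isomorphic in thm}. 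So the entire task reduces to producing the matrix $\B$.

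To build $\B$, my plan is to first place $K$ in John's position via some $\A\in \SL_n(\R)$: arrange that $B^n$ is the maximal inscribed ellipsoid of $\A K$, so that by John's decomposition the contact points of $B^n$ with the facets of $\A K$ supply an isotropic identity $\sum c_i\, u_i\otimes u_i=\Id$ with $\sum c_i=n$, where $u_1,\ldots,u_{\up}$ are the unit outer normals of $\A K$. Then set $\B=s\A$ and choose the scalar $s>0$ so that $\vol_n(\B K)^{1/n}\asymp \sqrt{\up/n}$. Because $\lambda$ is $(-2)$-homogeneous and $\vol_n$ is $n$-homogeneous under scaling, the remaining estimate $\lambda(\B K)\lesssim \up$ is then equivalent to the single scale-invariant bound
\[
\lambda(\A K)\cdot \vol_n(\A K)^{2/n}\;\lesssim\;\frac{\up^2}{n}
\]
in the chosen position.

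The heart of the proof, and the main obstacle, is this last spectral estimate. The crude monotonicity bound $\lambda(\A K)\le \lambda(B^n)\asymp n^2$ coming from $B^n\subset \A K$ is already too weak unless $\up\gtrsim n^{3/2}$; what is needed is a test function in the Rayleigh quotient $\int|\nabla\phi|^2/\int \phi^2$ that sees the facet structure globally rather than just through the inradius. Natural candidates for $\phi$ are a smooth approximation of the minimum facet-distance $\phi(x)=\min_i(\tau_i-\langle x,u_i\rangle)$, or the geometric mean $\phi(x)=\bigl(\prod_{i=1}^{\up}(\tau_i-\langle x,u_i\rangle)\bigr)^{1/\up}$, with $\tau_i$ the support value of the $i$-th facet of $\A K$. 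The John isotropic identity on $u_1,\ldots,u_{\up}$ should then allow one to average the $\up$ per-facet contributions to $|\nabla \phi|^2$ and extract the factor $\up^2/n$. The companion upper bound on $\vol_n(\A K)^{2/n}$ should come from a polytope-tailored volumetric inequality --- for instance a variant of the Carl--Pajor--Gluskin bound~\eqref{eq:quote CP} applied to the dual polytope $(\A K)^\circ$ --- and it is precisely this polytopal input, unavailable in the general setting of \cite{Nao24}, that yields the sharper bound $\lesssim \up^2/n$ and consequently the improvement over \cite{Nao24} in the regime $\up=o(n\log n)$.
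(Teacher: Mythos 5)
Your overall scheme (produce a good position $\B$, bound $\vol_n(\B K)$ and $\lambda(\B K)$, feed the product into the Cheeger-body argument) matches the paper, but the key step --- the choice of $\B$ --- is wrong in a way that sinks the rest. John's decomposition theorem gives an isotropic identity $\sum_i c_i\,u_i\otimes u_i=\Id_n$ only over the \emph{contact points} of the maximal inscribed ellipsoid with $\partial(\A K)$; for an origin-symmetric body there are at most $n(n+1)/2$ of these, so when $\up\gg n^2$ almost none of the facets touch the John ellipsoid and carry no weight $c_i$. There is then nothing to average over the $\up$ facets, and any test function built only from the contact normals fails to vanish on the non-contact facets of $\partial(\B K)$, so it is not admissible in the Dirichlet Rayleigh quotient. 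The paper avoids this by \emph{not} using John's position: it writes $K=\{x:\max_i|\langle x,y_i\rangle|\le1\}$, sets $\sY$ to be the matrix with columns $y_1,\dots,y_m$ ($\up=2m$), and takes $\B=(\sY\sY^*)^{1/2}$. In this Lewis/Ball-type position \emph{every} normalized facet normal $u_i$ of $\B K$ enters the decomposition $\sum_{i=1}^m c_i\,u_i\otimes u_i=\Id_n$ with $\sum c_i=n$, which is exactly what both steps need.

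Two further mismatches at the technical level. The volume bound does not come from a dual Carl--Pajor--Gluskin estimate (that inequality runs the wrong way, lower-bounding the volume of slab intersections); it comes from the geometric Brascamp--Lieb inequality applied to the slab indicators, giving $\vol_n(\B K)\le\prod_i(2/\sqrt{c_i})^{c_i}\le(2\sqrt{m/n})^n$. For the spectral bound the paper's test function is the full product $\varphi(x)=\prod_{i=1}^m\bigl(1-c_i\langle x,u_i\rangle^2\bigr)$, combined with Fubini and Pr\'ekopa log-concavity of the one-dimensional marginals; your geometric-mean candidate $\bigl(\prod_i(\tau_i-\langle x,u_i\rangle)\bigr)^{1/\up}$ damps the gradient by the exponent $1/\up$ and would not produce the factor $\up$, while a smoothed $\min$ of facet distances is much harder to integrate against the isotropic identity. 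Finally, note the paper proves the theorem for origin-symmetric $K$ (hence $\up=2m$), which suffices because the deduction of Theorem~\ref{thm:weak reverse iso} applies it to the symmetrized body $K''$; your proposal does not address this reduction.
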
  

Prior to proving Theorem~\ref{thm:spectral}, we will next assume its validity and explain how it implies Theorem~\ref{thm:weak reverse iso}: 
\begin{proof}[Deduction of Theorem~\ref{thm:weak reverse iso} from Theorem~\ref{thm:spectral}] Note first that the  following quick consequence of Fubini  holds for every compact subset $E$ of $\R^n$ that satisfies  $\vol_n(E)>0$:
\begin{equation}\label{eq:fubini E+E}
\sup_{x\in \R^n} \vol_n\big (E\cap (x-E)\big)\ge \frac{\vol_n(E)^2}{\vol_n(E+E)}.
\end{equation}
Indeed, $E\cap (x-E)\neq \emptyset$ if and only if $x\in E+E$, whence: 
$$
\vol_n(E+E) \sup_{x\in \R^n} \vol_n\big (E\cap (x-E)\big)\ge \int_{\R^n} \vol_n\big (E\cap (x-E)\big)\ud x=\iint_{\R^n\times \R^n} \1_E(y)\1_{E}(x-y)\ud y\ud x=\vol_n(E)^2.
$$

Let $K\subset \R^n$ be a convex body. Then,  $\sqrt[n]{\vol_n(K\cap (x-K))}\ge \sqrt[n]{\vol_n(K)}/2$ for some  $x\in \R^n$  by an application of~\eqref{eq:fubini E+E} with $E=K$ and the fact that the convexity of $K$ can be restated as  $K+K= 2K$. Denote: 
\begin{equation}\label{eq:def K'K''}
K'\eqdef -\frac12 x+K\qquad\mathrm{and}\qquad  K''\eqdef K'\cap(-K').
\end{equation}
Then $K''$ is an origin-symmetric convex body that satisfies:
\begin{equation}\label{eq:vol K''}
\vol_n(K'')^{\frac{1}{n}}\gtrsim \vol_n(K)^{\frac{1}{n}}. 
\end{equation}
 If furthermore $K$ is a convex polytope that has $\up$ facets (per the setting of Theorem~\ref{thm:weak reverse iso} ), then $K''$ is an origin-symmetric convex polytope that has at most $2\up$ facets.

 Apply Theorem~\ref{thm:spectral} to $K''$, thus obtaining  a positive definite matrix $\B\in \GL_n(\R)$ that satisfies:
\begin{equation}\label{eq:spectal two parts''}
\vol_n(\B K'')^{\frac{1}{n}}\lesssim \sqrt{\frac{\up}{n}}\qquad\mathrm{and}\qquad \lambda(\B K'')\lesssim \up. 
\end{equation}
As $\B$  is positive definite and invertible, we may consider the matrix $\A= (\det\B)^{-1/n}\B\in \SL_n(\R)$. Now:  
\begin{equation}\label{eq:spectal product}
\lambda(\A K'')\vol_n(\A K'')^{\frac{2}{n}}\stackrel{\eqref{eq:hpomogeneity of lambda}}{=}\lambda(\B K'')\vol_n(\B K'')^{\frac{2}{n}}\stackrel{\eqref{eq:spectal two parts''}}{\lesssim}\frac{\up^2}{n}.  
\end{equation}

Following~\cite{Nao24}, let $\Ch \A K''\subset \A K''$ denote the Cheeger body of $\A K''$, namely, it is the unique measurable subset of $\A K''$ that satisfies $\Per(\Ch \A K'')/\vol_n(\Ch \A K'')\le \Per(E)/\vol_n(E)$ for every measurable $E\subset \A K''$ with $\vol_n(E)>0$, where $\Per(\cdot)$ denotes perimeter in the sense of Caccioppoli and de~Giorgi (a thorough treatment of this notion of perimeter can be found in e.g.~\cite{AFP00}). By~\cite{AC09}, such a minimizer exists and it is indeed unique, and furthermore it is a convex subset of $\A K''$. The aforementioned  (substantial) theorem of~\cite{AC09} that this minimizer is unique implies in particular that since $\A K''$ is origin-symmetric, its Cheeger body  $\Ch \A K''$ is also origin-symmetric. By substituting~\eqref{eq:spectal product} into  equation~(1.62)  of~\cite{Nao24}, we see that:
\begin{equation}\label{eq:quote cheeger inequality}
\frac{\iq(\Ch \A K'')}{\sqrt{n}}\left(\frac{\vol_n(\A K'')}{\vol_n(\Ch \A K'')}\right)^{\frac{1}{n}}\lesssim \frac{\up}{n}.
\end{equation}
By the isoperimetric theorem~\eqref{eq:quote isoperimetric theorem} applied to $\Ch \A K''$  and the inclusion  $\Ch \A K''\subset \A K''$, we have:
\begin{align}\label{eq:max form}
\begin{split}
\frac{\iq(\Ch \A K'')}{\sqrt{n}}&\left(\frac{\vol_n(\A K'')}{\vol_n(\Ch \A K'')}\right)^{\frac{1}{n}}\gtrsim \max \left\{\frac{\iq(\Ch \A K'')}{\sqrt{n}},\left(\frac{\vol_n(\A K'')}{\vol_n(\Ch \A K'')}\right)^{\frac{1}{n}}\right\}\\&=\max \left\{\frac{\iq(\Ch \A K'')}{\sqrt{n}},\left(\frac{\vol_n(K'')}{\vol_n(\Ch \A K'')}\right)^{\frac{1}{n}}\right\} \stackrel{\eqref{eq:vol K''}}{\gtrsim} \max \left\{\frac{\iq(\Ch \A K'')}{\sqrt{n}},\left(\frac{\vol_n(K)}{\vol_n(\Ch \A K'')}\right)^{\frac{1}{n}}\right\}, 
\end{split}
\end{align}
where the second step of~\eqref{eq:max form} holds because  $\A$ is volume-preserving.  

Finally, if we choose $z= -(1/2)\A x$ and $L= \Ch \A K''$, then $L$ is an origin-symmetric convex body, thanks to the definitions~\eqref{eq:def K'K''} we know that $L$ is a subset of $z+(\A K)\cap (\A x-\A K)\subset z+\A K$, and by  combining~\eqref{eq:quote cheeger inequality} with~\eqref{eq:max form} we conclude that the desired requirements~\eqref{eq:desired conditions for isomorphic in thm} of Theorem~\ref{thm:weak reverse iso} indeed hold. 
\end{proof}

\begin{remark}{\em  The above proof of Theorem~\ref{thm:weak reverse iso} demonstrates that for every convex polytope $K\subset \R^n$ that has $\up$ facets there exists $\A\in \SL_n(\R)$ such that  $\lambda(\A K)\vol_n(K)^{2/n}\lesssim \up^2/n$.  The previously best-known bound here~\cite[page~51]{Nao24} (for any convex body $K$) is $\lambda(\A K)\vol_n(K)^{2/n}\lesssim n(\log n)^2$. Thus, we obtain an asymptotic improvement whenever $\up=o(n\log n)$ and not only when $\up =O(n)$.  
}
\end{remark}

\begin{proof}[Proof of Theorem~\ref{thm:spectral}] Because $K$ is origin-symmetric and bounded, its number of facets $\up\ge 2n$ is even, so we may write $\up=2m$ for some integer $m\ge n$. Thus, we can fix $y_1,\ldots,y_m\in \R^n\setminus\{0\}$ such that  
\begin{equation}\label{eq:new K slabs}
K=\big\{x\in \R^n:\ \max_{i\in \m} |\langle x,y_i\rangle|\le 1\big\}.
\end{equation} 

As $K$ has nonempty interior, $y_1,\ldots,y_m$ span $\R^n$, so if we let $\sY\in \M_{n\times m}(\R)$ be the $n$-by-$m$ matrix whose columns are $y_1,\ldots, y_m$, then the rank of $\sY$ equals $n$. Consequently, $n$-by-$n$ matrix $\sY\sY^*\in \M_n(\R)$ is positive semidefinite and invertible, so we can define  $\sfC=(\sY\sY^*)^{-1/2}\sY\in \M_{n\times m}(\R)$. It is straightforward to verify that $\mathsf{C}\mathsf{C^*}$ is the $n$-by-$n$ identity matrix $\Id_n$. In other words, the rows $\ur_1,\ldots,\ur_n\in \R^m$ of $\mathsf{C}$ are orthonormal, so there are $\ur_{n+1},\ldots,\ur_{m}\in \R^m$ such that $\ur_1,\ldots,\ur_m$ is an orthonormal basis of $\R^m$.  Let $\V\in \O_m$ denote the orthogonal matrix whose rows are $\ur_1,\ldots,\ur_m$. Let   $\ug_1,\ldots,\ug_m\in \R^m$ be the columns of $\V$, which  are also an orthonormal basis of $\R^m$. Letting $\mathsf{R}_{m\to n}:\R^m\to \R^n$ denote the restriction operator from $\R^m$ to $\R^n$, i.e., $\mathsf{R}_{m\to n}w=(w_1,\ldots,w_n)$ for every $w=(w_1,\ldots,w_m)\in \R^m$, observe that:
\begin{equation}\label{eq:CK}
\B K=\big\{x\in \R^n:\ \max_{i\in \m} |\langle x,\mathsf{R}_{m\to n}\ug_i\rangle| \le 1\big\}\qquad\mathrm{where}\qquad \B\eqdef \left(\sY\sY^*\right)^{\frac12}.
\end{equation}
Indeed, $\B K=\{z\in \R^n:\ \max_{i\in \m} |\langle z,\B^{-1}y_i\rangle| \le 1\}=\{z\in \R^n:\ \max_{i\in \m} |\langle z,\mathsf{R}_{m\to n}\B^{-1}y_i\rangle| \le 1\}$ by~\eqref{eq:new K slabs}, and $\B^{-1}y_i\in \R^m$ is the $i$'th column of $\mathsf{C}=\B^{-1}\sY$ for every $i\in \m$, which equals by definition $\ug_i$. 

As $\B K$ has $2m$ facets, it follows from~\eqref{eq:CK}  that $\mathsf{R}_{m\to n}\ug_i\neq 0$. We can therefore denote
\begin{equation}\label{eq:def ciui}
\forall i\in \m,\qquad c_i\eqdef \|\mathsf{R}_{m\to n}\ug_i\|_{\!\ell_{\!\!2}^n}\qquad\mathrm{and}\qquad u_i\eqdef \frac{1}{\sqrt{c_i}}\mathsf{R}_{m\to n}\ug_i\in S^{n-1}. 
\end{equation}
Using this notation, we can rewrite~\eqref{eq:CK} as follows:
\begin{equation}\label{eq:write AK using ui}
\B K=\big\{x\in \R^n:\ \max_{i\in \m} c_i\langle x,u_i\rangle^2\le 1\big\}=\bigcap_{i=1}^m \bigg\{x\in \R^n:\ -\frac{1}{\sqrt{c_i}}\le \langle x,u_i\rangle\le \frac{1}{\sqrt{c_i}}\bigg\}. 
\end{equation}
Furthermore, because $\ug_1,\ldots,\ug_m$ is an orthonormal basis of $\R^m$, we have 
$$
\forall x\in \R^n, \qquad \sum_{i=1}^n c_i \langle x,u_i\rangle^2\stackrel{\eqref{eq:def ciui}}{=} \sum_{i=1}^n  \langle \mathsf{R}_{m\to n}^* x,\ug_i\rangle^2=\|\mathsf{R}_{m\to n}^* x\|_{\!\ell_{\!\!2}^m}^2=\|x\|_{\!\ell_{\!\!2}^n}^2. 
$$
In other words, the following matrix identity holds: 
\begin{equation}\label{eq:decomposition of the identity}
\sum_{i=1}^m c_i u_i\otimes u_i=\I_n. 
\end{equation}

Thanks to~\eqref{eq:decomposition of the identity}, according the geometric form of the
 Brascamp--Lieb inequality~\cite{BL76} that was first formulated in~\cite{Bal89}
(see also e.g.~Theorem~2 in~\cite{Bal01}), if $f_1,\ldots,f_m:\R\to [0,\infty)$ are measurable, then:
\begin{equation}\label{eq:quote BL}
 \int_{\R^n}\bigg(\prod_{i=1}^m f_i(\langle x,u_i\rangle)^{c_i}\bigg)\ud x\leq
 \prod_{i=1}^m\bigg(\int_{-\infty}^\infty f_i(t)\ud t\bigg)^{c_i}.
\end{equation}
Consequently,
\begin{equation}\label{eq:use BL}
 \vol_n(\B K) \stackrel{\eqref{eq:write AK using ui}}{=}\int_{\R^n}\bigg(\prod_{i=1}^m \1_{\big[-\frac{1}{\sqrt{c_i}},\frac{1}{\sqrt{c_i}}\big]}(\langle x,u_i\rangle)^{c_i}\bigg)\ud x \stackrel{\eqref{eq:quote BL}}{\le}
 \prod_{i=1}^m\left(\frac2{\sqrt{c_i}}\right)^{c_i}.
\end{equation}
By taking the trace of~\eqref{eq:decomposition of the identity} we get $\sum_{i=1}^m c_i=n$. The maximum of the right hand side of~\eqref{eq:use BL} over all possible $c_1,\ldots,c_m>0$ satisfying $\sum_{i=1}^m c_i=n$ is attained when $c_1=\ldots=c_m=n/m$. Hence~\eqref{eq:use BL}  implies that:
$$
\vol_n(\B K)^{\frac{1}{n}}\le 2\sqrt{\frac{m}{n}}=\sqrt{\frac{2\up}{n}},
$$
thus proving the first part of~\eqref{eq:spectal two parts}.

To prove the second part of~\eqref{eq:spectal two parts}, thus concluding the proof of Theorem~\ref{thm:spectral}, define $\f:K\to \R$ by setting:
\begin{equation}\label{eq:def our test function}
\forall x\in \R^n,\qquad \f(x)\eqdef =\prod_{i=1}^m\left(1-c_i\langle x,u_i\rangle^2\right)\stackrel{\eqref{eq:def ciui}}{=}\prod_{i=1}^m\left(1-\langle x,\mathsf{R}_{m\to n}\ug_i\rangle^2\right).
\end{equation}
Then, $\f$ is smooth (it is a polynomial), and thanks to~\eqref{eq:write AK using ui} it vanishes on the boundary of $K$. The standard (see e.g.~\cite{Cha84}) Rayleigh quotient
characterization of the smallest nonzero eigenvalue of the negative Laplacian $-\Delta$ on $K$ with Dirichlet boundary conditions therefore gives:
\begin{equation}\label{eq:Rayleigh quotient}
\lambda(\B K)\le \frac{\int_{\B K}\|\nabla\f(x)\|_{\!\ell_{\!\!2}^n}^2\ud x}{\int_{\B K} \f(x)^2\ud x}.
\end{equation}

To treat the numerator in~\eqref{eq:Rayleigh quotient}, define $\Phi:\R^m\to \R$ by setting:
\begin{equation}\label{eq:def Phi}
\forall z=(z_1,\ldots,z_m)\in \R^n,\qquad \Phi(x)\eqdef \prod_{i=1}^m \big(1-z_i^2\big). 
\end{equation}
Recalling that $\ug_1,\ldots,\ug_m$ are the columns of the  orthogonal matrix $\V\in \O_m$, i.e., $\ug_i=\V e_i$ for $i\in \m$, where $e_1,\ldots,e_m$ is the standard coordinate basis of $\R^m$, for every $x\in \R^n$ we  have:
$$
\f(x)=\Phi\big(\langle x, \mathsf{R}_{m\to n}\V e_1\rangle,\ldots,\langle x, \mathsf{R}_{m\to n}\V e_m\rangle\big)=\Phi\big(\langle \V^*\mathsf{R}_{m\to n}^*x, e_1\rangle,\ldots,\langle \V^*\mathsf{R}_{m\to n}^*x, e_m\rangle\big)=\Phi(\V^*\mathsf{R}_{m\to n}^*x). 
$$
Hence, $\nabla\f (x)= \mathsf{R}_{m\to n}\V \nabla\Phi(\mathsf{R}_{m\to n}\V x)= \mathsf{R}_{m\to n}\V \nabla\Phi(\V^*\mathsf{R}_{m\to n}^*x)$ by the chain rule. Consequently,  
\begin{align}\label{eq:nabla bound}
\begin{split}
&\!\!\!\|\nabla\f(x)\|_{\!\ell_{\!\!2}^n}^2=\|\mathsf{R}_{m\to n}\V \nabla\Phi(\V^*\mathsf{R}_{m\to n}^*x)\|_{\!\ell_{\!\!2}^n}^2\le \|\V \nabla\Phi(\V^*\mathsf{R}_{m\to n}^*x)\|_{\!\ell_{\!\!2}^m}^2=\|\nabla\Phi(\V^*\mathsf{R}_{m\to n}^*x)\|_{\!\ell_{\!\!2}^m}^2\\& \!\!\!\!\!\! \stackrel{\eqref{eq:def Phi}}{=}
\sum_{i=1}^m 4\langle \V^*\mathsf{R}_{m\to n}^*x,e_i\rangle^2\!\!\!\!\! \prod_{j\in \m\setminus \{i\}} \big(1-\langle \V^*\mathsf{R}_{m\to n}^*x,e_j\rangle^2\big)^2= 4 \sum_{i=1}^m  c_i \langle x,u_i\rangle^2 \!\!\!\!\! \prod_{j\in \m\setminus \{i\}} \big(1-c_j \langle x,u_j\rangle^2\big)^2.
\end{split}
\end{align}
Consequently, if for each $i\in \m$ we define
\begin{equation}\label{eq:def gi}
\forall t\in \Big[-\frac{1}{\sqrt{c_i}},\frac{1}{\sqrt{c_i}}\Big],\qquad g_i(t)\eqdef \int_{(tu_i+u_i^\bot)\cap \B K}
\bigg(\prod_{j\in \m\setminus \{i\}}\big(1-c_j\langle x,u_j\rangle^2\big)^2\bigg)dx, 
\end{equation}
then by Fubini we have 
\begin{equation}\label{eq:use fubini 1}
\int_{\B K}\|\nabla\f(x)\|_{\!\ell_{\!\!2}^n}^2\ud x\stackrel{\eqref{eq:nabla bound}\wedge\eqref{eq:def gi}\wedge \eqref{eq:write AK using ui}}{\le} \sum_{i=1}^m 4c_i \int_{-\frac{1}{\sqrt{c_i}}}^{\frac{1}{\sqrt{c_i}}}t^2g_i(t)\ud t.
\end{equation}

To estimate the right hand side of~\eqref{eq:use fubini 1}, observe that for every $i\in \m$ the function 
$$
\forall x\in \R^n,\qquad h_i(x)\eqdef  \left\{ \begin{array}{cl} \sum_{j\in \m\setminus \{i\}} \log \big(1-c_j\langle x,u_j\rangle^2\big)&\mathrm{if\ } x\in \B K,\\ 0&\mathrm{if\ } x\in \R^n\setminus\B K,\end{array}\right.
$$
is well-defined by~\eqref{eq:write AK using ui}, and concave on $\R^n$ since $\B K$ is convex and  $(t\in [-1,1])\mapsto \log(1-t^2)$ is concave (its second derivative equals $-2(1+t^2)/(1-t^2)^2$). The function $g_i$ that is defined in~\eqref{eq:def gi} is thus the marginal of the log-concave function $e^{h_i}$ in the direction of $u_i$, whence $g_i$ is log-concave on $[-1/\sqrt{c_i},1/\sqrt{c_i}]$ by~\cite{Pre73}. Also, $g_i$ is  nonnegative and even, because $\B K$ is origin-symmetric, so it is nonincreasing on $[0,1/\sqrt{c_i}]$ (if $0\le s\le t$, then write $s=\lambda t+(1-\lambda)(-t)$ for $\lambda=(s+t)/(2t)\in [0,1]$ and estimate $g(s)\ge g(t)^\lambda g(-t)^{1-\lambda}=g(t)$ by log-concavity). For fixed $i\in \m$, we can therefore bound the $i$'th summand in~\eqref{eq:use fubini 1} as follows: 
\begin{align}\label{eq:first integral split}
\begin{split}
4c_i \int_{-\frac{1}{\sqrt{c_i}}}^{\frac{1}{\sqrt{c_i}}}t^2g_i(t)\ud t&= 8c_i\int_0^{\frac{1}{2\sqrt{c_i}}}t^2g_i(t)\ud t+ 8c_i\int^{\frac{1}{\sqrt{c_i}}}_{\frac{1}{2\sqrt{c_i}}}t^2g_i(t)\ud t\\ &\le \bigg(\max_{0\le s\le \frac{1}{2\sqrt{c_i}}}\frac{8c_i s^2}{(1-c_i s^2)^2}\bigg) \int_0^{\frac{1}{2\sqrt{c_i}}}(1-c_it^2)^2g_i(t)\ud t+ 8c_i g_i\Big(\frac{1}{2\sqrt{c_i}}\Big)\int^{\frac{1}{\sqrt{c_i}}}_{\frac{1}{2\sqrt{c_i}}}t^2\ud t\\
&= \frac{32}{9}\int_0^{\frac{1}{2\sqrt{c_i}}}(1-c_it^2)^2g_i(t)\ud t+\frac{7g_i\Big(\frac{1}{2\sqrt{c_i}}\Big)}{3\sqrt{c_i}},
\end{split}
\end{align}
where the final step of~\eqref{eq:first integral split} holds because the maximum that appears in~\eqref{eq:first integral split} is attained at $s=1/(2\sqrt{c_i})$.
The final term in~\eqref{eq:first integral split}  can be bounded from above as follows: 
\begin{equation}\label{eq:bound g at cutoff}
\int_0^{\frac{1}{2\sqrt{c_i}}}(1-c_it^2)^2g_i(t)\ud t\ge g_i\Big(\frac{1}{2\sqrt{c_i}}\Big)\int_0^{\frac{1}{2\sqrt{c_i}}}(1-c_it^2)^2\ud t=\frac{203g_i\Big(\frac{1}{2\sqrt{c_i}}\Big)}{480\sqrt{c_i}}. 
\end{equation}
By combining~\eqref{eq:first integral split} and~\eqref{eq:bound g at cutoff} we conclude that the following inequality  holds for every $i\in \m$:  
\begin{align}\label{eq:fubini2}
\begin{split}
4c_i \int_{-\frac{1}{\sqrt{c_i}}}^{\frac{1}{\sqrt{c_i}}}t^2g_i(t)\ud t&< 10\int_0^{\frac{1}{2\sqrt{c_i}}}(1-c_it^2)^2g_i(t)\ud t \le 10\int_{0}^{\frac{1}{\sqrt{c_i}}} (1-c_it^2)^2g_i(t)\ud t\\& =  5\int_{-\frac{1}{\sqrt{c_i}}}^{\frac{1}{\sqrt{c_i}}} (1-c_it^2)^2g_i(t)\ud t
\stackrel{\eqref{eq:write AK using ui}\wedge \eqref{eq:def our test function}\wedge \eqref{eq:def gi}}{=}5\int_{\B K} \f(x)^2\ud x,
\end{split}
\end{align}
where the last step of~\eqref{eq:fubini2} is another application of Fubini. By substituting~\eqref{eq:fubini2} into~\eqref{eq:use fubini 1} and then substituting the resulting estimate into~\eqref{eq:Rayleigh quotient}, we get that $\lambda(\B K)\le 5m\asymp \up$, i.e., the second part of~\eqref{eq:spectal two parts} holds.    
\end{proof}

\bibliographystyle{abbrv}
\bibliography{BBN}

 \end{document}